\documentclass[11pt]{amsart}

\usepackage{amsmath, amssymb,  mathabx, amsfonts,enumerate}
\usepackage[all]{xy}
\usepackage{amscd,stmaryrd}
\usepackage{comment}
\usepackage{mathtools}
\usepackage{tikz} 
\usepackage{tikz-cd} 
\tikzcdset{diagrams={nodes={inner sep=7.5pt}}}

\usepackage{url}
\usepackage{hyperref}

\newtheorem{theorem}{Theorem}[section]
\newtheorem{lemma}[theorem]{Lemma}
\newtheorem{corollary}[theorem]{Corollary}

 \theoremstyle{definition}
 \newtheorem{definition}[theorem]{Definition}

 \newtheorem{example}[theorem]{Example}

  \newtheorem*{example*}{Example}

\numberwithin{equation}{section}

\newcommand {\Z}{\mathbb{Z}} 
\newcommand {\R}{\mathbb{R}} 
 
\newcommand {\C}{\mathbb{C}}

\newcommand{\NN}{\mathcal{N}}

\newcommand{\PP}{\mathcal{P}}

\DeclareMathOperator{\Fix}{Fix}

\DeclareMathOperator{\Id}{Id}

\begin{document}
\title[Stable properties of post-injunctive groups]{Around Gromov's injectivity lemma and applications to post-injunctive groups}  
\author[Xuan Kien Phung]{Xuan Kien Phung}
\address{Département d'informatique et de recherche opérationnelle,  Université de Montréal, Montréal, Québec, H3T 1J4, Canada.}
\email{phungxuankien1@gmail.com}   
\subjclass[2020]{37B10, 37B15, 37B50, 43A05, 43A07, 68Q80}
\keywords{} 

\begin{abstract}
 Gottschalk's surjunctivity conjecture states that for all group universes and finite alphabets, every equivariant and continuous  selfmap  of the full shift, known as cellular automaton, cannot be a strict embedding.   Not all surjective cellular automata are injective. However, if the  
surjectivity condition is replaced by a certain strengthened property called post-surjectivity then all post-surjective cellular automata must be bijective whenever the universe is a sofic group. A group universe is said to be post-injunctive if every post-surjective cellular automaton with finite alphabet over this group universe must be  bijective. Gromov's injectivity lemma states  each injective cellular automaton  over a subshift  can be extended to an injective cellular automaton over every subshift  which is close enough to the initial subshift.  
In this paper, we obtain analogous results where injectivity is replaced by other fundamental dynamical properties namely post-surjectivity and pre-injectivity. We also study various stable properties of the class of  post-injunctive groups in parallel to properties of surjunctive groups. Among the  results, we show that semidirect extensions of post-injunctive groups with residually finite kernels must be post-injunctive. 
\end{abstract}
\maketitle
  
\setcounter{tocdepth}{1}

\section{Introduction}

\label{s:definitions}
To state the results, we first recall some basic  notions of symbolic dynamics. 
Given a discrete set $A$ and a group $G$, the \emph{full shift} $A^G$ consists of \emph{configurations} $x \in A^G$ which are  maps $x \colon G \to A$. In particular, a {constant configuration} $c \in A^G$ refers simply to  a constant map $c\colon G \to A$. 
We say that two configurations $x,y  \in A^G$ are \emph{asymptotic} if  $x\vert_{G \setminus E}=y\vert_{G \setminus E}$ for some finite subset $E \subset G$.  A configuration $x\in A^G$ is \emph{asymptotically constant} if $x$ is asymptotic to some constant configuration in $A^G$. The \emph{Bernoulli shift} action on the full shift $G \times A^G \to A^G$ is defined by $(g,x) \mapsto g x$, 
where $(gx)(h) =  x(g^{-1}h)$ for all   $g,h \in G$,  $x \in A^G$. The {full shift} $A^G$ is equipped with the prodiscrete topology. 
For every subset $X\subset A^G$, we denote the restriction of $X$ to a subset  $F\subset G$ by  $X_F=\{x\vert_F \colon x \in X\}$. 
\par 
Following the first construction of a cellular automaton  over $\Z^2$ by von Neumann and Ulam  
  \cite{neumann} (see also  \cite{burks}),
a cellular automaton  over the group $G$ (called the \emph{universe}) and the set $A$ (called the \emph{alphabet}) is   a $G$-equivariant and uniformly continuous  self-map $A^G\to A^G$.    Equivalently, we have the following more specific definition of cellular automata thanks to a characterization of Hedlund \cite{hedlund-csc}, \cite{hedlund}. 

\begin{definition}
\label{d:ca}
Let  $G$ be a group universe and let $A$ be an alphabet. Let $M$ be a finite subset of $G$ and let $\mu \colon A^M \to A$ be a map. We define the cellular automaton $\tau \colon A^G \to A^G$  by  setting 
\begin{align*}
    \tau(x)(g)=  
 \mu((g^{-1}x)  
	\vert_M) \quad  \text{ for all } x \in A^G \text{  and } g \in G. 
\end{align*}
Such a set $M$ is called a \emph{memory} and $\mu$ is called a \emph{local transition map} of $\tau$. 
 \end{definition} 
\par 
We can think of each element $g \in G$ as a cell of the universe. Note that every cellular automaton is uniform in the sense that all the cells follow the same local transition map. 
When  different cells can evolve according to different local transition maps, we obtain a more general class of machines called non-uniform cellular automata  \cite{Den-12a}, \cite{Den-12b}, \cite[Definition~1.1]{phung-tcs}. 
\par 
Cellular automata, also called tessellation structures, are fundamental discrete models of computation in computer science with applications notably in   physics, biology, and cryptography. Their studies have also led to the discovery of various deep connections with areas of mathematics such as geometric group theory and ring theory. Two milstones results in the theory of cellular automata relating their fundamental properties are  the Garden of Eden theorem and the surjunctivity of sofic groups-a very large class of groups of widespread interests originally  introduced by Gromov to tackle Gottschalks' surjunctivity conjecture. 
We recall these basic properties of cellular automata. Let $\tau \colon A^G\to A^G$ be a cellular automaton. 
We say that  
\begin{itemize}
    \item $\tau$ is \emph{pre-injective} if $\tau(x) = \tau(y)$ implies $x= y$ whenever $x, y \in A^G$ are asymptotic configurations; 
    \item 
    $\tau$ is \emph{post-surjective} if for all $x, y \in A^G$ with $y$ asymptotic to $\tau(x)$, then   $y= \tau(z)$ for some $z \in A^G$ asymptotic to $x$;
    \item 
    $\tau$ is   \emph{reversible} or \emph{left-invertible} if there exists a cellular automaton $\sigma \colon A^G \to A^G$ such that $\sigma \circ \tau = \Id_{A^G}$;
    \item 
    $\tau$ is  \emph{invertible} if it is bijective and the inverse map  $\tau^{-1}$ is a cellular automaton. 
\end{itemize}
  Note that by the closed image property,  post-surjectivity implies surjectivity for cellular automata. The Garden of Eden theorem \cite{myhill}, \cite{moore}, \cite{tullio}  states that for cellular automata over amenable group universes, pre-injectivity and surjectivity are equivalent properties.  Gottschalks' surjunctivivty conjecture \cite{gottschalk} states that for cellular automata with finite alphabets over group universes, we have   injectivity$\implies$surjectivity. Amenable groups are precisely groups satisfying the Garden of Eden theorem \cite{bartholdi-kielak}. It is not known whether the surjunctivity conjecture holds true for every group but Gromov \cite{gromov-esav} and Weiss \cite{weiss-sgds} show that sofic groups satisfy the surjunctivity conjecture.  
A certain dual surjunctivity  version of Gottschalk's conjecture was  studied  by 
Capobianco, Kari, and Taati in \cite{kari-post-surjective} which  states that  every post-surjective cellular automaton over a group universe and a finite alphabet is also pre-injective. The authors settled in the same paper \cite{kari-post-surjective} the case of sofic group universes.  
The above results motivate the following notion of \emph{surjunctive} and \emph{post-injunctive} groups. 
\begin{definition}
    \label{def:post-injunctive} A group $G$ is  {\textit{post-injunctive}} if for every finite alphabet $A$, every post-surjective cellular automaton $\tau\colon A^G\to A^G$ must be pre-injective. Similarly,  a group $G$ is  {\textit{surjunctive}} if for every finite alphabet $A$, every injective  cellular automaton $\tau\colon A^G\to A^G$ must be surjective.
\end{definition}

In particular, every sofic group is both  surjunctive and post-injunctive. Remark that when restricted to the class of linear cellular automata, the notions of post-injunctivity and surjunctivity are in fact dual to each other. 
\par 
Surjunctive groups are known to satisfy some stable properties. Among these, Gromov's injectivity lemma (cf. \cite[Lemma 4.H"]{gromov-esav} and \cite[Theorem~3.6.1]{csc-book}) states that every injective cellular automaton $\tau \colon \Sigma \to \Sigma$ over a subshift $\Sigma\subset A^G$ can be extended to an embedding of every subshift $X\subset A^G$ which is close enough to the initial subshift $\Sigma$. Note that Gromov's injectivity lemma also provides the key argument in the proof of the closedness of the space of $\Gamma$-marked surjunctive groups.  
\par 
The main goal of this paper is to establish  analogues of Gromov's injectivity lemma for post-surjectivity and pre-injectivity as well as  several stable group-theoretic properties of post-injunctive groups that were known to be satisfied by surjunctive groups. We can summarize our main results as follows: 
\begin{itemize}
    \item Virtually post-injunctive groups are post-injunctive (Lemma~\ref{l:virtual-post-injunctive}). 
   \item  Every subgroup of a post-injunctive group is also post-injunctive (Theorem~\ref{t:subgroup-post-injunctive}). 
   \item 
   A group is post-injunctive if and only if it is locally post-injunctive (Theorem~\ref{t:locally-post-injunctive}). 
   \item 
   Fully residually post-injunctive groups are post-injunctive (Theorem~\ref{t:fully-residually-post-injunctive}). 
   \item 
 Semidirect extensions of a post-injunctive group with
finitely generated residually a finite kernel are also  post-injunctive (Theorem~\ref{t:semdirect-extension-res-finite-kernel}). 
\item 
Limits of certain sequences of pre-injective cellular automata are also pre-injective cellular automata (Lemma~\ref{l:pre-injective-mark-group}).   
\item An analogue of Gromov's injectivity lemma for post-surjectivity holds true (Lemma~\ref{l:post-surjective-mark-group}): being post-surjective is essentially an open property. 
\item The space of $\Gamma$-marked post-injunctive groups is closed for every given group $\Gamma$ (Theorem~\ref{t:marked-groups-closed}).  
\end{itemize}

The paper is organized as follows. We recall basic constructions of induced cellular automata over quotient groups in Section~\ref{s:map-phi-psi}. Well-known descriptions of the uniform structures on the space of marked groups and the space of subsets of a full shift are given in Section~\ref{s:marked-groups}. We then establish some preliminary results in Section~\ref{s:post-injunctive-groups} on post-injunctive groups. The proofs of Lemma~\ref{l:virtual-post-injunctive},  
  Theorem~\ref{t:subgroup-post-injunctive},  Theorem~\ref{t:locally-post-injunctive}, and  (Theorem~\ref{t:fully-residually-post-injunctive}) 
are given in Section~\ref{s:fully-res-locally-post-injunctive}. The proofs of Theorem~\ref{t:semdirect-extension-res-finite-kernel} and Theorem~\ref{t:marked-groups-closed} are presented in Section~\ref{s:mark-groups-proof} and Section~\ref{s:closedness-mark-groups} respectively. Analogues of Gromov's injectivity lemma for cellular automata are obtained in Section~\ref{s:analogues-gromov-lemma} for several fundamental properties such as pre-injectivity and post-surjectivity. A counterexample is also described in Section~\ref{s:analogues-gromov-lemma}.

\section{Restriction and cellular automata over quotient groups}
\label{s:map-phi-psi}
 Let $M\subset G$ be a finite memory set of a cellular automaton $\tau\colon A^G\to A^G$ and let $\mu \colon A^M \to A$ be the corresponding local transition map. For every group homomorphism $\varphi \colon G\to K$, we can construct an induced cellular automaton   $\tau_K \colon A^K \to A^K$ as follows. Let $H = \ker \varphi \subset G$ and let 
 $$\Fix(H) = \{x\in A^G\colon hx=x \text{ for all } h \in H\}$$ be the set of $H$-periodic configurations in $A^G$. \vspace{0.1cm}
\par 
\noindent 
\textbf{The auxiliary maps $\Phi$ and $\Psi$.} We have a canonical bijection: 
    \begin{align}
    \label{map:phi}
        \Phi \colon A^K \to \Fix (H)
    \end{align}
    given by $\Phi(x)=\tilde{x}$ where $\tilde{x}\in A^G$ is the lifting configuration defined by $\tilde{x}(g)= x(\varphi(g))$ for all $g \in G$. Note that $\tilde{x} \in \Fix(H)$ since for all $h \in H$ and $g \in G$, we have 
    $$h\tilde{x}(g)= \tilde{x}(h^{-1}g)=x(\varphi(h^{-1}g)= x(\varphi(h^{-1})\varphi(g))= x(\varphi(g))=\tilde{x}(g).$$
    \par 
    \noindent 
    The inverse of $\Phi$ is the following map 
    \begin{align}
     \label{map:psi}
        \Psi\colon \Fix(H)\to A^K
        \end{align}
        defined by  $\Psi(x)(k)=x(g)$ for all $x\in \Fix(H)$, $k \in K$ and $g\in \varphi^{-1}(k)$. Recall that the homomorphism $\varphi$ is surjective, $\varphi^{-1}(k)= Hg$, and thus $\Psi$ is well defined:  $x(hg) = x(g)$ for all $h \in H$ since $x\in \Fix(H)$.  
    \vspace{0.1cm}
    \par 
    \noindent
    \textbf{The induced cellular automaton $\tau_K$.}
    We can now define $$\tau_K(x)=\Psi(\tau(\Phi(x))) \text{ for all } x\in A^K.
    $$
    Observe that $N=\varphi(M)\subset K$ is a memory set of $\tau_K$. The corresponding local transition map of $\tau_K$ is the map $\mu_K \colon A^N \to A$ given by $\mu_K(y)= \mu(z)$ for all $y \in A^N$, where $z \in A^M$ is defined by $z(m)= y(\varphi(m))$ for all $m \in M$. To summarize, we have a commutative diagram: 
\[
\begin{CD}
\Fix(H) @>{\tau}>> \Fix(H)   \\
@V{\Psi}VV @VV{\Psi}V \\
A^K @>{\tau_K}>> A^K
\end{CD}
\] 
In particular, note that the restriction $\tau\vert_{\Fix(H)}$ is conjugate to the cellular automaton $\tau_K \colon A^K \to A^K$.

\section{The space of marked groups}
\label{s:marked-groups}

 Recall that a group $G$ is a $\Gamma$-marked group if there exists an exact sequence of group homomorphisms 
$$
0 \to H \to \Gamma \to G \to 0. 
$$
We identify $G$ with the normal subgroup $H \subset \Gamma$. Thus, the space of $\Gamma$-marked groups is identified with the space of normal subgroups $\mathcal{N}(\Gamma)$ of $\Gamma$. The space $\mathcal{P}(\Gamma)$ of all subsets of $\Gamma$ is identified with $\{0,1\}^\Gamma$ via the bijection $\chi \colon \PP(\Gamma) \to \{0,1\}^\Gamma$ defined for every $D\in \PP(\Gamma)$ by $\chi(D)(g)=1$ if $g \in D$ and  $\chi(D)(g)=0$ if $g \in \Gamma\setminus D$. 
We equip $\{0,1\}^\Gamma$ with the discrete uniform structure. This induces a uniform structure on the subspace $\mathcal{N}(\Gamma) \subset \mathcal{P}(\Gamma) = \{0,1\}^\Gamma$. Therefore, a base of entourages of $\mathcal{N}(\Gamma) $ is given by the following collection of sets $\{V_E\colon E\subset \Gamma, |E|<\infty\}$ where 
$$
V_E= \{(H_1, H_2)\in \NN(\Gamma)\times \NN(\Gamma)\colon H_1\cap E= H_2\cap E\}. 
$$
These entourages define a base of a  topology on $\NN(\Gamma)$ given by $V_E[H]= \{K\in \NN(\Gamma)\colon (H,K)\in V_E\}$ for all $E\subset \Gamma$ finite. Equipped with this topology, the space $\NN(\Gamma)$ is totally disconnected,  compact, and Hausdorff (see \cite[Proposition~3.4.1]{csc-book}). 
\par 
Let $A$ be an alphabet. Let $W$ be an  entourage  of $A^\Gamma$. For every $D\subset \PP(\Gamma)$, we define the $W$-neighborhood of $D$ in $A^\Gamma$ by 
$$
W[D]= \{x\in A^\Gamma\colon (x,y)\in W \text{ for some } y\in D \}. 
$$
The entourage $W$ induces the following entourage $\widehat{W}$ in the Hausdorff-Bourbaki uniform structure on $\PP(A^\Gamma)$: 
$$
\widehat{W} = \{(X,Y)\in \PP(A^\Gamma)\times \PP(A^\Gamma)\colon X\subset W[Y] \text{ and } Y\subset W[X]\}. 
$$

\section{Post-injunctive groups}
\label{s:post-injunctive-groups}
In some of our proofs, it will be more convenient to work with the following characterizations of post-surjectivity. 
\begin{lemma}
\label{l:post-surj}
 Let $\tau\colon A^G \to A^G$ be a cellular automaton over an alphabet $A$ and a group universe $G$. Then the following conditions are equivalent: 
 \begin{enumerate}[\rm (i)]
 \item $\tau $ is post-surjective; 
\item 
for all $g \in G$ and $x,y \in A^G$ with $y\vert_{G\setminus\{g\}} = \tau(x)\vert_{G\setminus \{g\}}$, there exists $z\in A^G$ asymptotic to $x$ such that $\tau(z)=y$; 
\item 
for all $x,y \in A^G$ with  $y\vert_{G\setminus\{1_G\}} = \tau(x)\vert_{G\setminus \{1_G\}}$, there exists $z\in A^G$ asymptotic to $x$ such that $\tau(z)=y$.
\end{enumerate}
\end{lemma}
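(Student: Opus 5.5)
We prove the cycle of implications (i)$\Rightarrow$(ii)$\Rightarrow$(iii)$\Rightarrow$(i). The first two are immediate. If $\tau$ is post-surjective and $y\vert_{G\setminus\{g\}}=\tau(x)\vert_{G\setminus\{g\}}$, then $y$ is asymptotic to $\tau(x)$ with $E=\{g\}$, so the definition of post-surjectivity gives the required $z$. Condition (iii) is the special case $g=1_G$ of (ii).

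For (iii)$\Rightarrow$(ii) we will use $G$-equivariance. Suppose $y\vert_{G\setminus\{g\}}=\tau(x)\vert_{G\setminus\{g\}}$ and set $x'=g^{-1}x$ and $y'=g^{-1}y$. Since $\tau(x')=g^{-1}\tau(x)$ and $(g^{-1}u)(h)=u(gh)$, the configurations $y'$ and $\tau(x')$ agree outside $\{1_G\}$. Applying (iii) gives $z'$ asymptotic to $x'$ with $\tau(z')=y'$. Then $z=gz'$ is asymptotic to $x$, because translation maps finite sets to finite sets, and $\tau(z)=g\tau(z')=y$. This step is not strictly needed for the cycle, but it shows that (ii) and (iii) are directly equivalent.

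The main content is (ii)$\Rightarrow$(i), which we prove by induction on the size of the finite set where $y$ and $\tau(x)$ differ. Let $y$ be asymptotic to $\tau(x)$, and let $E=\{g\in G: y(g)\neq\tau(x)(g)\}$, which is finite. We show by induction on $|E|$ that there is $z$ asymptotic to $x$ with $\tau(z)=y$.

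If $|E|=0$, take $z=x$. Otherwise pick $g\in E$ and define $y_1$ to agree with $y$ off $g$ and with $\tau(x)$ at $g$. The set where $y_1$ differs from $\tau(x)$ is $E\setminus\{g\}$, so the induction hypothesis gives $z_1$ asymptotic to $x$ with $\tau(z_1)=y_1$. Now $y$ and $\tau(z_1)=y_1$ agree on $G\setminus\{g\}$, so (ii) gives $z$ asymptotic to $z_1$ with $\tau(z)=y$. Since asymptoticity is transitive (the union of two finite sets is finite), $z$ is asymptotic to $x$.

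The only point requiring care is the inductive step, where (ii) must be applied to the new configuration $z_1$ rather than to $x$. This is legitimate because (ii) is quantified over all $x\in A^G$. No compactness or finiteness of $A$ is used anywhere in the argument.
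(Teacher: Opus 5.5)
Your proof is correct and follows the paper's argument essentially verbatim: (i)$\Rightarrow$(ii)$\Rightarrow$(iii) come from the definitions, (iii)$\Rightarrow$(ii) comes from translating by $g^{-1}$ and using $G$-equivariance, and (ii)$\Rightarrow$(i) comes from correcting the finitely many differing cells one at a time, where your induction on $|E|$ is the paper's successive construction of $z_1,\dots,z_n$. One correction to your framing: the step (iii)$\Rightarrow$(ii) is not optional, because you never prove (iii)$\Rightarrow$(i) directly and your induction invokes (ii) at an arbitrary cell $g$, so this step is exactly what closes the cycle (and you did include it, so the proof is complete).
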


\begin{proof}
It follows directly from the definition of post-surjectivity that we have (i)$\implies$ (ii)$\implies$(iii).  We will show that (iii)$\implies$(ii)$\implies$(i). 
\par 
For the implication (iii)$\implies$(ii), assume that $\tau$ satisfies (iii) and consider $g \in G$, $x,y \in A^G$ with $y\vert_{G\setminus\{g\}} = \tau(x)\vert_{G\setminus \{g\}}$. 
By the $G$-equivariance of $\tau$, we have $\tau(g^{-1}x)= g^{-1}\tau(x)$. Moreover,  $(g^{-1}y)\vert_{G\setminus\{1_G\}}= (g^{-1} \tau(x))\vert_{G\setminus\{1_G\}}$ since $y\vert_{G\setminus\{g\}} = \tau(x)\vert_{G\setminus \{g\}}$. It follows that $(g^{-1}y)\vert_{G\setminus\{1_G\}}= \tau(g^{-1}x)\vert_{G\setminus\{1_G\}}$. Applying (iii) to $g^{-1}y$ and $g^{-1}x$, we deduce that there exists $t\in A^G$ asymptotic to $g^{-1}x$ such that $\tau(t)= g^{-1}y$. Consequently, $z=gt\in A^G$ is asymptotic to $x$ and $\tau(z)=  \tau(gt)=g\tau(t)=gg^{-1}y=y$ as desired and (ii) is proved. 
\par 
Assume now that $\tau$ satisfies (ii). Let $x,y \in A^G$ such that $y$ is asymptotic to $\tau(x)$. Then we can find a finite subset $E\subset G$ such that $y\vert_{G\setminus E}= \tau(x)\vert_{G\setminus E}$. We write $E=\{g_1, ..., g_n\}$ and define $z_0=x$. By applying successively $n$ times the property (ii), we obtain a sequence $z_1, ..., z_n \in A^G$ such that for all $k=1,2,...,n$, the configuration $z_k$ is asymptotic to $z_{k-1}$ and $$
\begin{cases}
\tau(z_k)\vert_{G\setminus \{g_k\}} & = \tau(z_{k-1})\vert_{G\setminus \{g_k\}} \\   
\tau(z_k)(g_k) &= y(g_k). 
\end{cases}
$$ 
By the transitivity of the asymptotic relation of configurations, it follows that $z_n$ is asymptotic to $z_0=x$. An immediate induction on $k$ shows that $\tau(z_k)\vert_{\{g_1,...,g_k\}}=y\vert_{\{g_1,...,g_k\}}$ and $\tau(z_k)\vert_{G\setminus E }=\tau(x)\vert_{G\setminus E }$. Consequently, $\tau(z_n)\vert_{E}=y\vert_{E}$ and  $\tau(z_k)\vert_{G\setminus E }=\tau(x)\vert_{G\setminus E }$. Thus $\tau(z_n)=y$ and it follows that $\tau$ is post-surjective. 
We conclude that (ii)$\implies$(i) and the proof is complete.   \end{proof}

The next lemma relates post-surjectivity and pre-injectivity of a cellular automaton and its induced cellular automata over subgroups of the group universe. 
\begin{lemma}
\label{l:pre-post-induction}
    Let $H$ be a subgroup of a group $G$. Let $\tau\colon A^H \to A^H$ be a cellular automaton and let $\sigma \colon A^G \to A^G$ be the induced cellular automata with the same local transition map as $\tau$. The following properties  hold. 
    \begin{enumerate}[\rm (i)]
     \item $\tau$ is post-surjective if and only if $\sigma$ is post-surjective. 
        \item If $\sigma$ is pre-injective then $\tau$ is pre-injective. 
    \end{enumerate}
\end{lemma}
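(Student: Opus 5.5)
The plan is to use the standard coset decomposition of the full shift. Fix a set $T \subset G$ of representatives of the right cosets $Hg$, so that $G = \bigsqcup_{t\in T} Ht$. This gives a bijection
\[
A^G \to \prod_{t \in T} A^H, \qquad x \mapsto (x_t)_{t\in T}, \qquad x_t(h)=x(ht).
\]
Let $M\subset H$ be a memory set of $\tau$ with local map $\mu$. Since $M \subset H$, for every $h\in H$ and $t\in T$ we have $\sigma(x)(ht)=\mu\big(((ht)^{-1}x)\vert_M\big)$. Moreover $((ht)^{-1}x)(m)=x(htm)$, whereas $(h^{-1}x_t)(m)=x_t(hm)=x(hmt)$. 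To avoid this mismatch I will use left cosets $tH$ instead, writing $x_t(h)=x(th)$. Then $\sigma(x)(th)=\mu(m\mapsto x(thm))=\tau(x_t)(h)$, so $\sigma$ acts as $\prod_t \tau$ coordinatewise. A configuration $x$ is asymptotic to $y$ if and only if $x_t$ is asymptotic to $y_t$ for every $t$ and $x_t=y_t$ for all but finitely many $t$. The first step is to verify these two facts; both are routine.

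For (ii), suppose $\sigma$ is pre-injective and take asymptotic configurations $u,v\in A^H$ with $\tau(u)=\tau(v)$. Fix any $c\in A^H$ (for instance a constant one). Define $x,y\in A^G$ by $x_{1}=u$, $y_1=v$ (choosing $1_G\in T$), and $x_t=y_t=c$ for $t\neq 1$. Then $x,y$ are asymptotic and $\sigma(x)=\sigma(y)$ coordinatewise, so $x=y$ and hence $u=v$.

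For (i), assume first that $\tau$ is post-surjective, and take $x\in A^G$ and $y$ asymptotic to $\sigma(x)$. For each $t$, the configuration $y_t$ is asymptotic to $\tau(x_t)$, and $y_t=\tau(x_t)$ for all but finitely many $t$. For those finitely many exceptional $t$, choose $z_t$ asymptotic to $x_t$ with $\tau(z_t)=y_t$; for every other $t$ set $z_t=x_t$. The resulting $z$ is asymptotic to $x$ and satisfies $\sigma(z)=y$.

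Conversely, assume $\sigma$ is post-surjective. I will use Lemma~\ref{l:post-surj}(iii) for $\tau$. Take $u,w\in A^H$ with $w\vert_{H\setminus\{1_H\}}=\tau(u)\vert_{H\setminus\{1_H\}}$, and embed them as before: $x_1=u$, $y_1=w$, and $x_t=c$, $y_t=\tau(c)$ for $t\neq1$. Then $y$ differs from $\sigma(x)$ only at $1_G$, so there is $z$ asymptotic to $x$ with $\sigma(z)=y$. Its coordinate $z_1$ is asymptotic to $u$ and satisfies $\tau(z_1)=w$.

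The main obstacle is only the bookkeeping: choosing left cosets so that $\sigma$ really decomposes as $\prod\tau$, and handling the finiteness condition in the definition of asymptotic configurations in $A^G$ versus $A^H$.
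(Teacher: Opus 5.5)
Your proposal is correct and follows essentially the paper's route: identify $A^G$ with $\prod_{\gamma} A^{\gamma H}$ over left cosets so that $\sigma$ acts coset-by-coset as $\tau$, glue solutions on cosets for ``$\tau$ post-surjective $\Rightarrow$ $\sigma$ post-surjective'', and pad with a filler configuration $c$ (and $\tau(c)$ on the target side) off $H$ for the converse of (i) and for (ii). The only cosmetic difference is that you prove the forward implication of (i) directly from the definition over finitely many cosets. The paper instead reduces to a single-site change at $1_G$ via Lemma~\ref{l:post-surj}(iii), so only the coset $H$ needs repairing.
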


\begin{proof}
Let $M\subset H$ be a finite memory set of both $\tau$ and $\sigma$. Let $\mu \colon A^M \to A$ be their common local transition map. In particular, we have 
 \begin{align}
   \label{eq:induction-proof-1}
   \sigma(x)(g) = \mu((g^{-1}x)\vert_M) \quad \text{for all } x \in A^G, g\in G. 
   \end{align} 
\par 
Let $\Gamma \subset G$ be a complete set of representatives of the quotient $G/H$ such that $1_G\in \Gamma$, that is, $G/ H = \{ \gamma H \colon \gamma \in G \}$.  For each $\gamma\in \Gamma$, consider the following map $\tau_\gamma \colon A^{ \gamma H}\to A^{\gamma H}$ defined for every $x \in A^{\gamma H}$ and $h \in H$ by 
$$
\tau_\gamma(x)( \gamma h ) = \tau(x_\gamma)(h) = \mu((h^{-1}x_\gamma)\vert_M)
$$
where $x_\gamma \in A^H$ is given by $x_\gamma(k)= x(\gamma k)$ for all $k \in H$. Equivalently, 
$$
\tau_\gamma(x)(\gamma h)=\sigma(\tilde{x})(\gamma h) = \mu(((\gamma h)^{-1}\tilde{x})\vert_M)=\mu((h^{-1}x_\gamma)\vert_M)
$$
for every configuration $\tilde{x}\in A^G$ extending $x$. Observe that for every $x= (x\vert_{\gamma H})_{\gamma \in \Gamma} \in \prod_{\gamma \in \Gamma }A^{\gamma H} = A^G$, we have 
$$
\sigma(x) = (\sigma(x)\vert_{\gamma H})_{\gamma \in \Gamma} = (\tau_\gamma(x \vert_{\gamma H}))_{\gamma \in \Gamma}  \in \prod_{\gamma \in \Gamma }A^{\gamma H} = A^G.   
$$
In other words, we have a decomposition 
\begin{align}
\label{e:decomposition-tau-sigma-coset}
\sigma = \prod_{\gamma \in \Gamma} \tau_\gamma \,  \colon\,  \prod_{\gamma \in \Gamma }A^{\gamma H} \to \prod_{\gamma \in \Gamma }A^{\gamma H}. 
\end{align}
\par 
For (i), suppose first that $\tau$ is post-surjective. 
To show that $\sigma$ is also post-surjective, it suffices to check that $\sigma$ satisfies the condition (iii) in Lemma~\ref{l:post-surj}. For this, let $x,y \in A^G$ be such that $y\vert_{G\setminus \{1_G\}}=\sigma(x)\vert_{G\setminus \{1_G\}}$. Let $u=x\vert_H$ and  $v=y\vert_H$. Since the memory set $M$ of both $\sigma$ and $\tau$ is a subset of $H$, it follows from \eqref{eq:induction-proof-1} that $v\vert_{H\setminus \{1_H\}}=\tau(u)\vert_{H\setminus \{1_H\}}$. By  the post-surjectivity of $\tau$, there exists a configuration $w\in A^H$ asymptotic to $u$  such that $\tau(w)=v$. Let $z\in A^G$ be the configuration defined by $z\vert_H= w$ and $z\vert_{G\setminus H} = x\vert_{G\setminus H}$. Since $z\vert_H=w$ is asymptotic to $u=x\vert_H$ and $z\vert_{G\setminus H}=x\vert_{G\setminus H}$, the configurations $z$ and $x$ are asymptotic. We claim  that $\sigma(z)=y$.  Indeed,  for every $g \in G\setminus H$, we have $gM \subset G\setminus H$ (since $H$ is a subgroup of $G$) and thus $(g^{-1}z)\vert_M = (g^{-1}x)\vert_M$. In particular, it holds for all $g \in G\setminus H$ that 
   $$\sigma(z)(g)=\mu((g^{-1}z)\vert_M)= \mu((g^{-1}x)\vert_M)=\sigma(x)(g).$$
   Hence, $\sigma(z)\vert_{G\setminus H} = \sigma(x)\vert_{G\setminus H}$. Since $y\vert_{G\setminus \{1_G\}}=\sigma(x)\vert_{G\setminus \{1_G\}}$, we obtain 
   \begin{align}
   \label{eq:proof-subgrou-1}
\sigma(z)\vert_{G\setminus H} = y\vert_{G\setminus H}. 
   \end{align}
\noindent 
   Let $g \in H$. Then $gM \subset H$ and  $(g^{-1}z)\vert_M = (g^{-1}z\vert_H)\vert_M$. Consequently, $$ 
   \sigma(z)(g)=\mu((g^{-1}z)\vert_M)=  \mu((g^{-1}z\vert_H)\vert_M) = \tau(z\vert_H)= \tau(w)=v = y \vert_H.$$
   Therefore, 
    \begin{align}
   \label{eq:proof-subgrou-2}
\sigma(z)\vert_{H} = y\vert_{H}. 
   \end{align} 
   We infer from \eqref{eq:proof-subgrou-1} and \eqref{eq:proof-subgrou-2} that $\sigma(z)=y$ as claimed. It follows that $\sigma\colon A^G\to A^G$ is post-surjective by Lemma~\ref{l:post-surj}. In other words, if $\tau$ is post-surjective then so is $\sigma$. 
   \par 
Conversely, suppose that $\sigma$ is post-surjective. Let $x,y \in A^H$ be such that $y\vert_{H \setminus \{1_H\}} = \tau(x)\vert_{H \setminus \{1_H\}}$. Let $c\in A^{G}$ be an arbitrary configuration. Let $\tilde{x}, \tilde{y}\in A^G$ be the extensions of $x,y$ given by $\tilde{x}\vert_H = x$,  $\tilde{x}\vert_{G\setminus H}=c\vert_{G\setminus H}$, and $\tilde{y}\vert_H = y$, $  \tilde{y}\vert_{G\setminus H} = \sigma(c)\vert_{G\setminus H}$.  
Since $\tilde{x}\vert_{G\setminus H}=c\vert_{G\setminus H}$, we deduce from  the decomposition 
\eqref{e:decomposition-tau-sigma-coset}  that $ \sigma(\tilde{x})\vert_{G\setminus H}=\sigma(c)\vert_{G\setminus H}=\tilde{y}\vert_{G\setminus H} $. 
Moreover,  
\begin{align*}
\sigma(\tilde{x})\vert_H  = \tau (\tilde{x}\vert_{H})  
 = \tau(x). 
\end{align*}
Since $\tau(x)_{H \setminus \{1_H\}}=y_{H \setminus \{1_H\}}$, we conclude that 
$$
\sigma(\tilde{x})_{G \setminus \{1_G\}} = y_{G \setminus \{1_G\}}. 
$$
Since $\sigma$ is post-surjective, there exists $z\in A^G$ asymptotic to $\tilde{x}$ such that $\sigma(z)= \tilde{y}$. In particular, $z\vert_{H}$ is asymptotic to $\tilde{x}\vert_H= x$. Since $\sigma(z)= \tilde{y}$,  the decomposition 
\eqref{e:decomposition-tau-sigma-coset} implies that 
$$
\tau(z\vert_H) = \tilde{y}\vert_H =y. 
$$
This proves that $\tau$ is also post-surjective which completes the proof of (i).  
   \par
  For (ii), suppose first that $\tau$ is pre-injective. We need to prove that $\sigma$ is also pre-injective. Suppose on the contrary that there exist distinct asymptotic configurations $x,y \in A^G$ such that  $\sigma(x)=\sigma(y)$. By the decomposition 
\eqref{e:decomposition-tau-sigma-coset}, it follows that for every $\gamma \in \Gamma$, we have $\tau_\gamma(x\vert_{\gamma H}) = \tau_\gamma(y\vert_{\gamma H})
$ or equivalently, 
\begin{align}
\label{e:induction-post-sur-proof-3}
\tau(x_\gamma) = \tau (y_\gamma). 
\end{align} 
Since $x=(x\vert_{\gamma H})_{\gamma \in \Gamma}$ and $y=(y\vert_{\gamma H})_{\gamma \in \Gamma}$ are distinct, there must exist $\alpha \in \Gamma$ such that $x\vert_{\alpha H}\neq y\vert_{\alpha H}$ or equivalently, $x_\alpha \neq y_\alpha$. Since $x$ and $y$ are asymptotic, so are $x_\alpha$ and $y_\alpha$. But $\tau(x_\alpha)=\tau(y_\alpha)$ by \eqref{e:induction-post-sur-proof-3}, we conclude that $\tau$ is not pre-injective. This contradiction shows that $\sigma$ must be pre-injective. 
\par 
Conversely, suppose that $\sigma$ is pre-injective but $\tau$ is not pre-injective. Then there exists distinct  asymptotic configurations $x,y \in A^H$ such that $\tau(x)=\tau(y)$. Let $\tilde{x}, \tilde{y}\in A^G$ be extension of $x,y$ such that $\tilde{x}\vert_{G\setminus H}= \tilde{y}\vert_{G\setminus H}$. Then $\tilde{x}$ and $\tilde{y}$ are also  distinct  asymptotic configurations. Since $\tilde{x}\vert_{H}=x$, $ \tilde{y}\vert_{H}=y$, and $\tilde{x}\vert_{G\setminus H}= \tilde{y}\vert_{G\setminus H}$, the decomposition 
\eqref{e:decomposition-tau-sigma-coset} tells us that 
\begin{align*}
\sigma(\tilde{x})   & =\tau(\tilde{x}\vert_{H})\times \prod_{\gamma \in \Gamma \setminus\{1_G\}}\tau_\gamma(\tilde{x}\vert_{\gamma H}) \\ &  = \tau(x)\times \prod_{\gamma \in \Gamma \setminus\{1_G\}}\tau_\gamma(\tilde{x}\vert_{\gamma H})  \\ 
& = \tau(y)\times \prod_{\gamma \in \Gamma \setminus\{1_G\}}\tau_\gamma(\tilde{y}\vert_{\gamma H})\\
 & =\tau(\tilde{y}\vert_{H})\times \prod_{\gamma \in \Gamma \setminus\{1_G\}}\tau_\gamma(\tilde{y}\vert_{\gamma H})\\
 & = \sigma(y). 
\end{align*}
Hence, $\sigma$ is not pre-injective, contradicting the hypothesis on $\sigma$. Therefore, $\tau$ must be pre-injective and the proof of (ii) is complete. 
\end{proof}

The following uniform post-surjectivity result for cellular automata is a special case of the same property for non-uniform cellular automata with finite memory \cite[Lemma~13.2]{phung-tcs}.  

\begin{theorem}[Uniform post-surjectivity]
\label{t:uniform-post-surj}
Let $\tau \colon A^G\to A^G$ be a post-surjective cellular automaton with finite alphabet $A$ and countable group universe $G$. Then there exists a finite subset $F \subset G$ such that for every $x,y\in G$ and every $g\in G$ such that $y\vert_{G\setminus \{g\}}= \tau(x)\vert_{G\setminus \{g\}}$, there exists a configuration $z\in A^G$ such that $z\vert_{G\setminus (gE)} = x\vert_{G\setminus (gE)}$ and $\tau(z)=y$. 
\end{theorem}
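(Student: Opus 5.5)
By $G$-equivariance (as in the proof of Lemma~\ref{l:post-surj}, (iii)$\implies$(ii)), it suffices to treat $g=1_G$: once a finite $E\subset G$ works at $1_G$, translating $x,y$ by $g^{-1}$, applying the statement at $1_G$, and translating back by $g$ produces $z$ agreeing with $x$ outside $gE$. (Here $E$ is the finite set called $F$ in the statement, and $x,y\in A^G$.) So fix $g=1_G$. Let $M$ be a memory set of $\tau$, and enumerate the countable group as an increasing exhaustion by finite sets $E_1\subset E_2\subset\cdots$ with $\bigcup_n E_n=G$ and $M\cup M^{-1}\cup\{1_G\}\subset E_1$. We argue by contradiction: suppose that for every $n$ there exist $x_n,y_n\in A^G$ with $y_n\vert_{G\setminus\{1_G\}}=\tau(x_n)\vert_{G\setminus\{1_G\}}$ such that no $z$ with $z\vert_{G\setminus E_n}=x_n\vert_{G\setminus E_n}$ satisfies $\tau(z)=y_n$.

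The key step is a locality reduction. The existence of a good $z$ supported-change in $E_n$ depends only on finite data. Indeed, $\tau(z)=y_n$ is automatic outside $E_nM^{-1}$ (where $z$ and $x_n$ agree on the relevant windows, and $y_n=\tau(x_n)$ there since $1_G\in E_1$). So the condition is equivalent to the existence of a pattern $p\in A^{E_n}$ such that $\tau$ of the glued configuration $p\cup x_n\vert_{G\setminus E_n}$ agrees with $y_n$ on $E_nM^{-1}$. This depends only on $x_n\vert_{E_nM^{-1}M}$ and $y_n(1_G)$. Hence the set of bad pairs at level $n$ is defined by a condition on a finite window. Moreover, badness at level $n$ implies badness at every level $m\le n$, since $E_m\subset E_n$.

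By compactness of $A^G\times A^G$ (since $A$ is finite and $G$ is countable, this is a sequential compact metrizable space), pass to a subsequence with $(x_n,y_n)\to(x,y)$. The condition $y\vert_{G\setminus\{1_G\}}=\tau(x)\vert_{G\setminus\{1_G\}}$ is closed, so it passes to the limit. We want to show that $(x,y)$ is bad at every level $m$. Fix $m$. For all large $n$, the pair $(x_n,y_n)$ is bad at level $m$ by monotonicity, and $(x_n,y_n)$ agrees with $(x,y)$ on the finite window $E_mM^{-1}M$ and at $1_G$. Since badness at level $m$ depends only on that window, $(x,y)$ is bad at level $m$. So no $z$ differing from $x$ only inside some $E_m$ satisfies $\tau(z)=y$. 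As every $z$ asymptotic to $x$ differs from $x$ only on a finite set, which lies in some $E_m$, this contradicts post-surjectivity via Lemma~\ref{l:post-surj}(iii).

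The main obstacle is the locality step: one must check carefully that the set of cells where $\tau(z)$ can differ from $\tau(x_n)$ is contained in $E_nM^{-1}$, so that the window $E_nM^{-1}M$ fully determines solvability. With $\tau(z)(h)=\mu((h^{-1}z)\vert_M)$, the value at $h$ reads $z$ on $hM$, and $hM$ meets $E_n$ only if $h\in E_nM^{-1}$. I expect this to go through once the conventions are fixed.
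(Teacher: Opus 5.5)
Your proof is correct. It is also more self-contained than the paper's treatment: the paper gives no argument for this statement and simply obtains it as a special case of a uniform post-surjectivity lemma for non-uniform cellular automata with finite memory, \cite[Lemma~13.2]{phung-tcs}.

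Your argument has three steps.
\begin{enumerate}[\rm (1)]
\item You use $G$-equivariance to reduce to the single site $1_G$. Non-uniform automata are not equivariant, so this reduction is unavailable in the cited setting.
\item You observe that whether $y$ can be reached by changing $x$ only inside $E_n$ is decided by the finite data $x\vert_{E_nM^{-1}M}$ and $y(1_G)$, and that badness is monotone in $n$.
\item Compactness of $A^G\times A^G$ then yields one pair $(x,y)$ with $y\vert_{G\setminus\{1_G\}}=\tau(x)\vert_{G\setminus\{1_G\}}$ for which no $z$ asymptotic to $x$ satisfies $\tau(z)=y$. This contradicts post-surjectivity.
\end{enumerate}
The citation buys generality, namely non-uniform rules. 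Your route buys transparency and, with a small change, a stronger statement that drops countability. Replace the exhaustion $E_1\subset E_2\subset\cdots$ by the directed family of all finite $E\supseteq M$. For each such $E$, let $B_E$ be the set of pairs satisfying the hypothesis at $1_G$ that are bad at level $E$. Each $B_E$ is closed, being a closed set intersected with a finite union of cylinder sets. Moreover $B_{E'}\subseteq B_E$ whenever $E\subseteq E'$, so the family has the finite intersection property. Tychonoff compactness then gives a pair that is bad at every level, with no countability assumption on $G$. This matters because the paper applies the theorem in Theorem~\ref{t:fully-residually-post-injunctive} and Lemma~\ref{l:post-surjective-mark-group} to groups that are not assumed countable.

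There is one minor slip. To get $\tau(z)=y_n$ automatically outside $E_nM^{-1}$, you need $1_G\in E_nM^{-1}$. This follows from $\varnothing\neq M\subset E_1$ (for instance, assume $1_G\in M$), not from $1_G\in E_1$.
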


 \section{On fully residually and locally post-injunctive groups}
 \label{s:fully-res-locally-post-injunctive}

Our first observation is that every extension of a post-injunctive group with finite kernel by a post-injunctive group must be post-injunctive. 
\begin{lemma}
\label{l:virtual-post-injunctive}
    Virtually post-injunctive  groups are post-injunctive. 
\end{lemma}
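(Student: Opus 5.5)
The plan is to reduce to the finite-index subgroup by enlarging the alphabet. Let $G$ be a group containing a post-injunctive subgroup $H$ of finite index, and let $\tau\colon A^G\to A^G$ be a post-surjective cellular automaton with finite alphabet $A$, memory $M$ and local transition map $\mu$. Choose a finite set $\Gamma\subset G$ of representatives of the right cosets, so that $G=\bigsqcup_{\gamma\in\Gamma} H\gamma$. Put $B=A^\Gamma$, which is a finite alphabet. Define the bijection $\theta\colon A^G\to B^H$ by $\theta(x)(h)(\gamma)=x(h\gamma)$. I will show that $\sigma=\theta\circ\tau\circ\theta^{-1}$ is a cellular automaton over $H$ with alphabet $B$. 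I will then check that $\sigma$ is post-surjective, and that pre-injectivity of $\sigma$ forces pre-injectivity of $\tau$. The hypothesis that $H$ is post-injunctive then finishes the proof.

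\textbf{Step 1: $\sigma$ is a cellular automaton.} For $h'\in H$ we have $\theta(h'x)(h)(\gamma)=x(h'^{-1}h\gamma)=(h'\theta(x))(h)(\gamma)$. Hence $\theta$ intertwines the restriction to $H$ of the shift on $A^G$ with the shift on $B^H$, and $\sigma$ is $H$-equivariant. For the finite memory, note that
$$\tau(x)(h\gamma)=\mu\bigl(((h\gamma)^{-1}x)\vert_M\bigr)$$
depends only on the values $x(h\gamma m)$ with $m\in M$. For each of the finitely many products $\gamma m$ ($\gamma\in\Gamma$, $m\in M$), write $\gamma m=k\gamma'$ with $k\in H$ and $\gamma'\in\Gamma$. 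Let $N\subset H$ be the finite set of all $k$ arising this way. Then $\sigma(u)(h)$ depends only on $u\vert_{hN}$, and this dependence is through a fixed map $B^N\to B$ read off from $\mu$. So $\sigma$ has memory $N$, which proves Step 1. Alternatively, one can check that $\theta$ is a uniform homeomorphism and invoke Hedlund's characterization.

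\textbf{Step 2: transfer of the properties.} Since $\Gamma$ is finite, $x,y\in A^G$ differ on a finite set if and only if $\theta(x),\theta(y)$ differ on a finite subset of $H$. Indeed, $x$ and $y$ differ at $h\gamma$ exactly when $\theta(x)$ and $\theta(y)$ differ at $h$. Thus $\theta$ preserves and reflects asymptoticity. Post-surjectivity and pre-injectivity are formulated solely in terms of the map and the asymptotic relation. Therefore $\tau$ is post-surjective (respectively pre-injective) if and only if $\sigma$ is. Since $\tau$ is post-surjective, so is $\sigma$. As $H$ is post-injunctive and $B$ is finite, $\sigma$ is pre-injective, and hence so is $\tau$. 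This proves that $G$ is post-injunctive.

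The only step requiring care is Step 1, namely the bookkeeping showing that the conjugated map is given by a finite memory on $H$. It works precisely because $\Gamma$ and $M$ are finite, so the set $N$ of translating elements is finite. The rest is formal, because the two dynamical properties are invariant under conjugacies that preserve the asymptotic relation.
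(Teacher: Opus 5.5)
Your proof is correct and follows the same route as the paper. Both recode $A^G$ as $B^H$ with $B=A^{\Gamma}$ over a finite transversal, observe that this bijection preserves and reflects asymptoticity, and transfer post-surjectivity and pre-injectivity through the resulting conjugacy. Your use of right cosets $H\gamma$, with $\theta(x)(h)(\gamma)=x(h\gamma)$, is what makes the recoding equivariant for the left shift; the paper's decomposition $g=s_gh_g$ does not give this as written. Your explicit finite-memory computation also fills in a step the paper only asserts.
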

\begin{proof}
    Suppose that $G$ has a finite index post-injunctive subgroup $H$. Let $\tau\colon A^G\to A^G$ be a post-surjective  cellular automaton. 
    Denote $B=A^{G/H}$. Let $S\subset G$ be a complete set of representatives of $G/H$ so that every element $g\in G$ can be written in a unique way as $g=s_g h_g$ where $s_g \in S$ and $h_g \in H$. Let $\pi\colon G\to G/H$ be the projection. We then have an $H$-equivariant bijection $\varphi \colon B^H \to A^G$ given by $\varphi(x)(g)=x(h_g) (\pi(s_g))$ for all $x\in B^H$ and $g \in G$. Moreover, 
    $\tau$ is conjugate to a cellular automaton $\sigma \colon B^H \to B^H$ defined by $\sigma(s)=\varphi^{-1}(\tau(\varphi(x)))$ for all $x \in B^H$. Since $G/H$ is finite, it is clear two configurations $x,y \in A^G$ are distinct, resp.  asymptotic, if and only if so are $\varphi(x), \varphi(y) \in B^H$. Consequently, it follows from the definition of post-surjectivity and pre-injectivity that $\sigma$ is post-surjective, resp. pre-injective, if and only if so is $\tau$.  Since $\tau$ is post-surjective, so is  $\sigma$. As $H$ is post-injective, $\sigma$ must be injective and thus so is $\tau$. This completes the proof  that $G$ is post-injunctive. 
\end{proof}

\begin{corollary}
\label{cor:finite-kernel}
Semidirect extension
with finite kernel of a post-injunctive group is post-injunctive.\qed 
\end{corollary}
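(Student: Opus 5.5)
The plan is to reduce the statement directly to Lemma~\ref{l:virtual-post-injunctive}. The key observation is that in a semidirect extension the post-injunctive group sits inside the total group as a subgroup of finite index. No cellular automaton argument beyond that lemma should be needed.

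\textbf{Setup.} Let $G = N \rtimes H$, where the kernel $N$ is a finite normal subgroup of $G$ and $H$ is a post-injunctive group. By definition of a semidirect product, $H$ embeds in $G$ as a complement to $N$. That is, $H$ is a subgroup of $G$ with $G = NH$ and $N \cap H = \{1_G\}$.

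\textbf{Finite index.} Every element $g\in G$ can then be written uniquely as $g = nh$ with $n \in N$ and $h\in H$. Hence the right cosets $Hg$ correspond bijectively to the elements of $N$, and likewise for left cosets. It follows that $[G:H] = |N| < \infty$.

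\textbf{Conclusion.} Thus $G$ contains the post-injunctive subgroup $H$ with finite index, so $G$ is virtually post-injunctive. Lemma~\ref{l:virtual-post-injunctive} then shows that $G$ is post-injunctive. Concretely, in the proof of that lemma one takes $S = N$ as the complete set of representatives of $G/H$ and $B = A^{G/H}$. Every post-surjective $\tau\colon A^G\to A^G$ is then conjugate to a post-surjective cellular automaton $\sigma \colon B^H\to B^H$. This $\sigma$ is pre-injective because $H$ is post-injunctive, and therefore so is $\tau$.

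There is no real obstacle here. The only point needing care is to use the complement $H$, not the normal kernel $N$, as the finite-index subgroup. The hypothesis of Lemma~\ref{l:virtual-post-injunctive} is about a finite-index subgroup being post-injunctive, and it does not require that subgroup to be normal, so the semidirect structure supplies exactly what is needed.
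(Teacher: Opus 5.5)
Your proposal is correct and follows the paper's argument: the paper gives no separate proof and treats the corollary as an immediate consequence of Lemma~\ref{l:virtual-post-injunctive}, since in $G = N \rtimes H$ the complement $H$ is a post-injunctive subgroup of index $|N|$. The ``concretely'' paragraph is not needed; if you do unpack it, note that $H$-equivariance for the left shift requires writing $g = h_g s_g$ with right cosets $Hg$, and $S = N$ also provides this because $G = HN$ with $H \cap N = \{1_G\}$.
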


Conversely, we show that subgroups of a post-injunctive group must be also post-injunctive.

 \begin{theorem}
 \label{t:subgroup-post-injunctive}
Post-injunctivity is stable by taking subgroups.  More specifically, every subgroup of a post-injunctive group is also  post-injunctive. 
 \end{theorem}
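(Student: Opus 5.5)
The plan is to use Lemma~\ref{l:pre-post-induction} directly. Let $G$ be a post-injunctive group, $H \subset G$ a subgroup, $A$ a finite alphabet, and $\tau \colon A^H \to A^H$ a post-surjective cellular automaton. We must show that $\tau$ is pre-injective.

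First choose a finite memory set $M \subset H$ and a local transition map $\mu \colon A^M \to A$ of $\tau$. Since $M \subset H \subset G$, the same pair $(M,\mu)$ defines, via Definition~\ref{d:ca}, a cellular automaton $\sigma \colon A^G \to A^G$. This is exactly the induced cellular automaton of Lemma~\ref{l:pre-post-induction}. By part (i) of that lemma, post-surjectivity of $\tau$ implies post-surjectivity of $\sigma$.

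Since $G$ is post-injunctive and $A$ is finite, the post-surjective cellular automaton $\sigma$ is pre-injective. Part (ii) of Lemma~\ref{l:pre-post-induction} then says that pre-injectivity of $\sigma$ forces pre-injectivity of $\tau$. As $A$ and $\tau$ were arbitrary, $H$ is post-injunctive.

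I do not expect a real obstacle here. The substantive work, namely the coset decomposition $\sigma = \prod_{\gamma} \tau_\gamma$ and the transfer of post-surjectivity in both directions, is already done in Lemma~\ref{l:pre-post-induction}. The only points to check are that the induced automaton is well defined (which needs $M \subset H$, automatic for a memory set of $\tau$) and that the alphabet stays finite. In particular, no countability assumption on $G$ or $H$ is required, since Theorem~\ref{t:uniform-post-surj} is never invoked.
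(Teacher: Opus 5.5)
Your proposal is correct and follows the paper's proof exactly. You induce $\sigma$ on $A^G$ from the same local rule $(M,\mu)$, transfer post-surjectivity upward via Lemma~\ref{l:pre-post-induction}(i), use post-injunctivity of $G$, and pull pre-injectivity back to $\tau$ via Lemma~\ref{l:pre-post-induction}(ii); your remark that no countability hypothesis is needed is also accurate, since Theorem~\ref{t:uniform-post-surj} plays no role here.
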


\begin{proof}
    Let $G$ be a post-injunctive group and let $H\subseteq G$ be an arbitrary subgroup. Let $A$ be a finite alphabet and let $\tau\colon A^H \to A^H$ be a post-surjective cellular automata with finite memory set $M \subseteq H$ and local transition map $\mu\colon A^M \to A$.  We must prove that $\tau$ is also pre-injective. 
  Since $M \subset H \subset G$, the cellular automata $\tau$ induces a cellular automaton $\sigma \colon A^G \to A^G$ which admits the same local transition map $\mu$ as $\tau$:  
   \begin{align}
   \sigma(x)(g) = \mu((g^{-1}x)\vert_M), \quad \text{for all } x \in A^G, g\in G. 
   \end{align}
   Since $\tau$ is post-surjective, we infer from  Lemma~\ref{l:pre-post-induction} that $\sigma$ is also post-surjective. As the group $G$ is post-injunctive, $\sigma$ must be pre-injective. Again by Lemma~\ref{l:pre-post-induction}, we deduce that $\tau$ is pre-injective. This proves that $H$ is a post-injunctive group and the proof of the theorem is complete.  
\end{proof}

Let $(P)$ be a property of groups. A group $G$ is said to be 
\begin{enumerate}[\rm (i)]
    \item locally $(P)$ if every finitely generated subgroup of $G$ satisfies $(P)$; 
    \item fully residually $(P)$ if for every finite subset $E \subset G$, there exist a group $H$ satisfying $(P)$ and a surjective group homomorphism $\varphi \colon G \to H$ such that $\varphi\vert_E$ is injective. 
\end{enumerate}

\begin{theorem}
\label{t:locally-post-injunctive}
    A group is post-injunctive if and only if it is locally post-injunctive. 
\end{theorem}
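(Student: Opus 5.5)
The forward direction is immediate from Theorem~\ref{t:subgroup-post-injunctive}: if $G$ is post-injunctive, then every subgroup of $G$ is post-injunctive, and in particular every finitely generated one. So $G$ is locally post-injunctive.

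For the converse, I will assume $G$ is locally post-injunctive. Fix a finite alphabet $A$ and a post-surjective cellular automaton $\tau\colon A^G\to A^G$ with finite memory set $M\subset G$ and local transition map $\mu\colon A^M\to A$. Let $H\subset G$ be the subgroup generated by $M$. It is finitely generated, hence post-injunctive by hypothesis. Since $M\subset H$, $\mu$ defines a cellular automaton $\tau_H\colon A^H\to A^H$, and $\tau$ is exactly the automaton induced from $\tau_H$ in the sense of Lemma~\ref{l:pre-post-induction}. By Lemma~\ref{l:pre-post-induction}(i), $\tau_H$ is post-surjective, because $\tau$ is.

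Since $H$ is post-injunctive, $\tau_H$ is pre-injective. The remaining task is to transfer pre-injectivity back up from $\tau_H$ to $\tau$. This is where some care is needed: item (ii) of Lemma~\ref{l:pre-post-induction} is stated only in the direction from $\sigma$ to $\tau$. However, its proof also establishes the converse via the coset decomposition \eqref{e:decomposition-tau-sigma-coset}, so I will make that argument explicit.

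Suppose $x,y\in A^G$ are asymptotic and satisfy $\tau(x)=\tau(y)$. By \eqref{e:decomposition-tau-sigma-coset}, $\tau_H(x_\gamma)=\tau_H(y_\gamma)$ for every coset representative $\gamma$, where $x_\gamma(h)=x(\gamma h)$. Each pair $x_\gamma,y_\gamma$ is asymptotic, because $x$ and $y$ differ only on a finite set. Pre-injectivity of $\tau_H$ then gives $x_\gamma=y_\gamma$ for all $\gamma$, and hence $x=y$. Therefore $\tau$ is pre-injective, and $G$ is post-injunctive.

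I expect no real obstacle. The only delicate point is that one must use the correct direction of the induction lemma for each property: post-surjectivity goes down from $\tau$ to $\tau_H$ via (i), while pre-injectivity goes up from $\tau_H$ to $\tau$ via the coset decomposition argument above.
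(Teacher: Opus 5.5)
Your proposal is correct and follows the paper's proof: restrict to the subgroup generated by the memory set, use Lemma~\ref{l:pre-post-induction}(i) to push post-surjectivity down, and then pull pre-injectivity back up. Your explicit coset-decomposition argument for the upward transfer is exactly the first half of the paper's proof of Lemma~\ref{l:pre-post-induction}(ii). The paper relies on that direction here even though the statement of (ii) records only the other one, so you were right to make it explicit.
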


\begin{proof}
Let $G$ be a group. If $G$ is post-injunctive then so is every subgroup of $G$ by Theorem~\ref{t:subgroup-post-injunctive} and thus every finitely generated subgroup of $G$ is post-injunctive. Conversely, suppose that $G$ is locally post-injunctive. Let $\tau \colon A^G \to A^G$ be a post-surjective cellular automaton. Let $M\subset G$ be a finite memory set of $\tau$ and let $\mu \colon A^M \to A$ be the corresponding local transition map. Let $H \subset G$ be the finitely generated subgroup generated by $M$. Since $M\subset H$, the restriction of $\tau$ to $A^H$ defines a cellular automaton $\sigma = \tau\vert_{A^H}\colon A^H \to A^H$ with the same memory set $M$ and local transition map $\mu$ as $\tau$. Since $\tau$ is post-surjective, Lemma~\ref{l:pre-post-induction} implies that $\sigma$ is also post-surjective. As $G$ is locally post-injunctive, the subgroup $H$ is post-injunctive. It follows that $\sigma$ is pre-injective. Again by Lemma~\ref{l:pre-post-induction}, we conclude that $\tau$ is also pre-injective. This proves that $G$ is a post-injunctive group.  
\end{proof}

Our next goal of this section is to prove the following result concerning fully residually post-injunctive groups. 

\begin{theorem}
\label{t:fully-residually-post-injunctive}
    Fully residually post-injunctive groups are post-injunctive.   
\end{theorem}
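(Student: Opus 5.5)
We argue by contradiction. By Theorem~\ref{t:locally-post-injunctive} it suffices to treat finitely generated, hence countable, groups. Being fully residually post-injunctive passes to subgroups: if $L\subset G$ and $\varphi\colon G\to H$ is injective on a finite $E\subset L$, then $\varphi\vert_L\colon L\to \varphi(L)$ is injective on $E$, and $\varphi(L)$ is post-injunctive by Theorem~\ref{t:subgroup-post-injunctive}. So assume $G$ is countable and fully residually post-injunctive. Let $\tau\colon A^G\to A^G$ be post-surjective with memory $M$ and local map $\mu$. Suppose, for contradiction, that $\tau$ is not pre-injective. Then there are $x,y\in A^G$ with $x\neq y$, agreeing outside a finite set $D$, and $\tau(x)=\tau(y)$. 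Let $F_0$ be the finite set from Theorem~\ref{t:uniform-post-surj}, enlarged so that $1_G\in F_0$ and $M\subset F_0$. Put
\[
E = F_0M^{-1}M \cup DM^{-1}M.
\]
By hypothesis there is a post-injunctive group $K$ and a surjection $\varphi\colon G\to K$ with $\varphi\vert_E$ injective. Let $\tau_K$ be the induced cellular automaton of Section~\ref{s:map-phi-psi}, with memory $N=\varphi(M)$. We will show that $\tau_K$ is post-surjective but not pre-injective, contradicting the post-injunctivity of $K$.

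\textbf{Non-pre-injectivity of $\tau_K$.} Set $F=DM^{-1}M$ and fix any $c\in A^K$. Define $x',y'\in A^K$ by $x'(\varphi(f))=x(f)$ and $y'(\varphi(f))=y(f)$ for $f\in F$, and $x'=y'=c$ outside $\varphi(F)$. These are well defined because $\varphi$ is injective on $F$. They are asymptotic, and distinct because $x\neq y$ on $D\subset F$.

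Take first $k\notin\varphi(DM^{-1})$. Then $kN\cap\varphi(D)=\emptyset$, since $k\varphi(m)=\varphi(d)$ would force $k=\varphi(dm^{-1})$. Hence $x'$ and $y'$ coincide on $kN$, so $\tau_K(x')(k)=\tau_K(y')(k)$.

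Now take $k=\varphi(g)$ with $g\in DM^{-1}$. Then $gM\subset F$ and $x'(\varphi(g)\varphi(m))=x(gm)$, so $\tau_K(x')(k)=\tau(x)(g)$. Likewise $\tau_K(y')(k)=\tau(y)(g)$, and these agree since $\tau(x)=\tau(y)$. Thus $\tau_K(x')=\tau_K(y')$.

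\textbf{Post-surjectivity of $\tau_K$.} We check condition (iii) of Lemma~\ref{l:post-surj}. Let $u,v\in A^K$ with $v\vert_{K\setminus\{1_K\}}=\tau_K(u)\vert_{K\setminus\{1_K\}}$, and put $\tilde u=\Phi(u)$. Define $y_0\in A^G$ to equal $\tau(\tilde u)$ except $y_0(1_G)=v(1_K)$; this is a single-site change in $G$. Theorem~\ref{t:uniform-post-surj} gives $z\in A^G$ agreeing with $\tilde u$ outside $F_0$ with $\tau(z)=y_0$. Define $z'\in A^K$ by $z'(\varphi(f))=z(f)$ for $f\in F_0$ and $z'=u$ elsewhere. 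Then $z'$ is asymptotic to $u$.

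The main obstacle is checking $\tau_K(z')=v$; this is exactly where injectivity of $\varphi$ on $F_0M^{-1}M$ is used.
\begin{itemize}
\item For $g\in F_0M^{-1}$ and $m\in M$ we claim $z'(\varphi(gm))=z(gm)$. If $gm\in F_0$ this holds by definition. Otherwise $\varphi(gm)\notin\varphi(F_0)$ by injectivity on $F_0M^{-1}M$, so $z'(\varphi(gm))=u(\varphi(gm))=\tilde u(gm)=z(gm)$. Hence $\tau_K(z')(\varphi(g))=\tau(z)(g)=y_0(g)$.
\item Each $k\in\varphi(F_0M^{-1})$ has a unique such preimage $g$, again by injectivity on $F_0M^{-1}M$. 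Consequently $y_0(g)=\tau(\tilde u)(g)=\tau_K(u)(k)=v(k)$ when $g\neq 1_G$, and $y_0(1_G)=v(1_K)$.
\item For $k\notin\varphi(F_0M^{-1})$, the set $kN$ avoids $\varphi(F_0)$, so $z'$ and $u$ agree on $kN$ and $\tau_K(z')(k)=\tau_K(u)(k)=v(k)$. Here $k\neq 1_K$, because $1_G\in F_0M^{-1}$ (recall $M\subset F_0$).
\end{itemize}
So $\tau_K(z')=v$, and $\tau_K$ is post-surjective. This contradicts the post-injunctivity of $K$, so $\tau$ must be pre-injective and $G$ is post-injunctive.
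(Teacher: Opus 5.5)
Your proof is correct and follows essentially the same route as the paper's. You argue by contradiction, use uniform post-surjectivity (Theorem~\ref{t:uniform-post-surj}) to fix a finite set, and choose a post-injunctive quotient $K$ on which $\varphi$ is injective on a finite set built from that set, the memory and the defect set. You then show $\tau_K$ is post-surjective (by a single-site lift to $G$ whose correction is pushed down to $K$; you do this directly on $K$ rather than through the $H$-periodic configuration $s$) and not pre-injective. Your preliminary reduction to finitely generated, hence countable, subgroups via Theorems~\ref{t:subgroup-post-injunctive} and~\ref{t:locally-post-injunctive} is a worthwhile addition in rigor, since Theorem~\ref{t:uniform-post-surj} is only stated for countable groups while the paper applies it to arbitrary $G$; your only tacit assumption is $M\neq\varnothing$, which is harmless.
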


\begin{proof}
    Let $A$ be a finite alphabet and let $G$ be a fully residually post-injunctive group. Let  $\tau \colon A^G \to A^G$ be a post-surjective cellular automaton. We need to show that $\tau$ is pre-injective. Suppose on the contrary that there exist two distinct asymptotic configurations $c, d \in A^G$ such that $\tau(c)=\tau(d)$. Then for some finite subset $E\subset G$, we have $c\vert_{G\setminus E}= d\vert_{G\setminus E}$ and  $c\vert_E\neq d\vert_E$. 
    \vspace{0.15cm}
    \par 
    \noindent 
    \textbf{Step 1: recall the auxiliary maps $\Phi$, $\Psi$, and $\tau_K$ (Section~\ref{s:map-phi-psi})}. 
    Let $M\subset G$ be a finite memory set of $\tau$ and let $\mu \colon A^M \to A$ be the associated local transition map. The projection map $\varphi \colon G\to K$ induces    a canonical $K$-equivariant bijection  
    $\Phi \colon A^K \to \Fix (H)$  (see \eqref{map:phi})
   with  inverse  $\Psi\colon \Fix(H)\to A^K$ defined by  $\Psi(x)(k)=x(g)$ for all $x\in \Fix(H)$, $k \in K$ and $g\in \varphi^{-1}(k)$ (see \eqref{map:psi}).  
Then $\tau$  induces the quotient cellular automaton $\tau_K \colon A^K \to A^K$ defined by $\tau_K(x)=\Psi(\tau(\Phi(x)))$ for all $x\in A^K$. The finite set $N=\varphi(M)\subset K$ is a memory set of $\tau_K$ with corresponding local transition map  $\mu_K \colon A^N \to A$ given by $\mu_K(y)= \mu(z)$ for all $y \in A^N$, where $z \in A^M$ is defined by $z(m)= y(\varphi(m))$ for all $m \in M$.
The following diagram is commutative: 
\[
\begin{CD}
\Fix(H) @>{\tau}>> \Fix(H)   \\
@V{\Psi}VV @VV{\Psi}V \\
A^K @>{\tau_K}>> A^K
\end{CD}
\] 
     \par 
Returning to our proof, Theorem~\ref{t:uniform-post-surj} and the  post-surjectivity of $\tau$  imply that there exists a finite subset $F \subset G$ such that for every $x,y\in G$ and every $g\in G$ such that $y\vert_{G\setminus \{g\}}= \tau(x)\vert_{G\setminus \{g\}}$, there exists a configuration $z\in A^G$ such that $z\vert_{G\setminus (gE)} = x\vert_{G\setminus (gE)}$ and $\tau(z)=y$. Up to enlarging $F$, we can assume without loss of generality that $M\subset F$, $1_G\in F$, and $F$ is symmetric, that is, $F=F^{-1}$.   
\par 
Let $\Omega=EF^2\cup F^3\supset F^2\supset F \supset M$. Since $G$ is fully residually post-injunctive, there exists a surjective homomorphism of groups $\varphi \colon G \to K$ such that $K$ is post-injunctive and $\varphi\vert_\Omega$ is injective. 
\vspace{0.15cm}
\par 
\noindent
\textbf{Step 2: proving the post-surjectivity of $\tau_K \colon A^K \to A^K$.} 
Let $x,y \in A^K$ be such that $y\vert_{K \setminus \{1_K\}} = \tau_K(x)\vert_{K \setminus \{1_K\}}$. Let $u=\Phi(\tau_K(x)) = \tau(\Phi(x))\in \Fix(H)$ and $v= \Phi(y) \in \Fix(H)$. We define $w \in A^G$ by setting $w\vert_{G\setminus \{1_G\}}= u\vert_{G\setminus \{1_G\}}$ and $w(1_G)= v(1_G)$. Then  $w\vert_{G\setminus H} = u\vert_{G\setminus H}=  v\vert_{G\setminus H}$. By the choice of $F$ and the post-surjectivity of $\tau$ applied to $\Phi(x)$ and $w$, there exists $t\in A^G$ such that $t\vert_{G\setminus F} = \Phi(x)\vert_{G\setminus F}$ and $\tau(t)=w$. Consider the following configuration $s\in A^G$ defined by 
$$
(*)\,\begin{cases}
    s(hg) = t(g) \, \text{ for all } h \in H,  g \in F.\\ 
    s\vert_{G\setminus (HF)}  = t\vert_{G\setminus (HF)}.
\end{cases}
$$
Note that $s\vert_{HF}$ is well-defined for if $h_1g_1=h_2g_2$ where $h_1,h_2\in H$ and $g_1,g_2\in F$ then as  $H=\ker \varphi$, we have 
$$\varphi(g_1)=\varphi(h_1g_1)=\varphi(h_2g_2)=\varphi(g_2)$$
and thus $g_1=g_2$ since $\varphi\vert_F$ is injective. 
\vspace{0.15cm}
\par 
\noindent 
\textbf{Claim: $s$ is $H$-periodic.} To see this, let $h \in H$ and $g \in G$. If $g\in HF$ then  $g=h' g'$ for some $h'\in H$ and $g'\in F$  and by $(*)$, we have  $$hs(g)=s(h^{-1}g)= s(h^{-1}h'g')=t(g')=s(h'g')=s(g)$$  because $h^{-1}h', h' \in H$. If $g\in G\setminus (HF)$ then $ h^{-1}g\in G\setminus(HF)$ and since $ s\vert_{G\setminus (HF)}  = t\vert_{G\setminus (HF)}$, $t\vert_{G\setminus F}= \Phi(x)\vert_{G\setminus F}$, $F\subset HF$, and $\Phi(x)\in \Fix(H)$, it follows that  
$$hs(g)= s(h^{-1}g)= t(h^{-1}g)= \Phi(x)(h^{-1}g)=\Phi(x)(g)= t(g)=s(g).$$  
Therefore,  $hs(g)= s(g)$ for all $h \in H$ and all $g \in G$. We deduce that  $s\in \Fix(H)$ as claimed. 
\par 
Define $z= \Psi(s)\in A^K$. For all $k \in K \setminus \varphi(F)$ and $g\in \varphi^{-1}(k)$ so that $g\in G\setminus (HF)$ (as $H=\ker \varphi$), we have  
\begin{align*}
z(k)= \Psi(s)(k)& = s(g) & \text{ (by definition of } z \text{ and } \Psi)  \\
& = t(g) & \text{ (by }  (*)) \\
& =\Phi(x)(g) & \text{ (as } t\vert_{G\setminus F} = \Phi(x)\vert_{G\setminus F})  \\
& = x(\varphi(g)) & \text{ (by definition of } z \text{ and } \Phi)\\
& =x(k) &  \text{ (as } g\in \varphi^{-1}(k)).
\end{align*}
\noindent 
This proves that $z\vert_{K \setminus \varphi(F)} = x\vert_{K \setminus \varphi(F)}$, that is, $z$ and $x$ are  asymptotic. 
\vspace{0.15cm}
\par 
\noindent 
\textbf{Claim: $\tau_K(z)=y$.} For this, 
first note that 
$\tau_K(z) = \tau_K(\Psi(s))= \Psi(\tau(s))$. 
Let $k \in K \setminus \varphi(F^2)$ and let $g \in \varphi^{-1}(k)$. Then $g\notin HF^2$ and thus $gF\cap HF = \varnothing$ (note that $F=F^{-1}$). 
Hence, 
$gM \subset gF \subset  G\setminus (HF)$. By $(*)$, we obtain $(g^{-1}s)\vert_M = (g^{-1}t)\vert_M$. Since $\tau(t)=w$, it follows from  the choice of $w, u$  that 
\begin{align*}
\tau_K(z)(k)& = \Psi(\tau(s))(k) = \tau(s)(g) = \mu((g^{-1}s)\vert_M) =\mu((g^{-1}t)\vert_M) \\ &  = \tau(t)(g) =w(g)=u(g)= \Phi(\tau_K(x))(g) = \tau_K(x)(k).  
\end{align*}
In other words, we have 
\begin{align}
\label{e:proof-fully-residually-1}
\tau_K(z)\vert_{K \setminus \varphi(F^2)}= \tau_K(x)\vert_{K \setminus \varphi(F^2)}. 
\end{align}
\par 
Now let $k \in \varphi(F^2)$. Then $k= \varphi(g)$ for some $g \in F^2$. Since $\varphi\vert_{F^3}$ is injective and $gM \subset  F^2M\subset F^3$, we have $(g^{-1}s)\vert_M = (g^{-1}t)\vert_M$. Combining with the relation $\tau(t)=w$, we find that 
\begin{align}
\label{e:proof-fully-residually-2}
\tau_K(z)(k)& = \Psi(\tau(s))(k) = \tau(s)(g) \nonumber \\& = \mu((g^{-1}s)\vert_M) =\mu((g^{-1}t)\vert_M) \nonumber \\ &  = \tau(t)(g) =w(g).  
\end{align}
If $k=1_K$ then $g=1_G$ by the injectivity of $\varphi\vert_{F^2}$ as $g\in F^2\cap \varphi^{-1}(k)$ and we deduce from \eqref{e:proof-fully-residually-2} that 
\begin{align}
    \label{e:proof-fully-residually-3} \tau_K(z)(1_K)=w(1_G) = v(1_G) = \Phi(y)(1_G)=y(1_K).
\end{align}
If $k \in \varphi(F^2)\setminus\{1_K\}$ then $k= \varphi(g)$ for some $g \in F^2\setminus\{1_G\}$. Hence, the relation~\eqref{e:proof-fully-residually-2} and the choice of $w,u$ imply that 
\begin{align*}
\tau_K(z)(k)=w(g) = u(g) = \Phi(\tau_K(x))(g)= \tau_K(x)(k). 
\end{align*}
In other words, we have 
\begin{align}
    \label{e:proof-fully-residually-4} 
    \tau_K(z)\vert_{\varphi(F^2)\setminus\{1_K\}}=\tau_K(x)\vert_{\varphi(F^2)\setminus\{1_K\}}. 
\end{align}
The relations  \eqref{e:proof-fully-residually-1},  \eqref{e:proof-fully-residually-3}, and  \eqref{e:proof-fully-residually-4} prove the claim that $\tau_K(z)=y$. We deduce from Lemma~\ref{l:post-surj}.(ii) that $\tau_K$ is indeed post-surjective. 
\vspace{0.15cm}
\par
\noindent
\textbf{Step 3: Obtaining a contradiction.}  
Let $a\in A$ be a fixed element and consider the following configurations $p,q \in A^K$ defined by 
$$
\begin{cases}
    p(\varphi(g))= c(g) & \text{ for all } g \in EF^2,\\
    q(\varphi(g))= d(g) & \text{ for all } g \in EF^2,\\
    p(k) = q(k)=a & \text{ for all } k \in K \setminus \varphi(EF^2).\\
\end{cases}
$$
Recall that $c\vert_{G\setminus E}= d\vert_{G\setminus E}$ and  $c\vert_E\neq d\vert_E$. 
Since $E\subset EF^2 \subset \Omega$ and $\varphi\vert_\Omega$ is injective, we deduce that $p,q\in A^K$ are well-defined and are distinct asymptotic configurations with  $p\vert_{K\setminus \varphi(E)}= d\vert_{G\setminus \varphi(E)}$. 
\par 
We claim that $\tau_K(p)=\tau_K(q)$. Indeed, if $k \in K \setminus \varphi(EF)$ then $kM\subset K \setminus \varphi(E)$ so  $(k^{-1}p)\vert_{N} = (k^{-1}q)\vert_{N}$ and thus $\tau_K(p)(k)=\tau_K(q)(k)$ (recall that $N=\varphi(M)$ is a memory set of $\tau_K$). If $k= \varphi(g)\in \varphi(EF)$ for some $g \in EF$ then $gM\subset EFM\subset EF^2$ and we have by the construction of $p,q$ that  
\begin{align*}
\tau_K(p)(k)& =\mu_K((k^{-1}p)\vert_{N})=\mu((g^{-1}c)\vert_{M}) = \tau(c)(g)\\
& =\tau(d)(g) = \mu((g^{-1}d)\vert_{M}) =\mu_K((k^{-1}q)\vert_{N}) =  \tau_K(q)(k). 
\end{align*}
It follows that $\tau_K(p)(k)= \tau_K(q)(k)$ for all $k \in K$, that is, $\tau_K(p)=\tau_K(q)$. Therefore, $\tau_K$ is not pre-injective, which is a contradiction since $K$ is a post-injunctive group and $\tau_K\colon A^K \to A^K$ is post-surjective by Step 2. 
\vspace{0.15cm}
\par 
\noindent
\textbf{Step 4: conclusion.} It follows from the above that every post-surjective cellular automaton $\tau\colon A^G \to A^G$ must be pre-injective. We conclude that $G$ is post-injunctive and the proof of the theorem is complete.  
\end{proof}

  \section{Closedness of post-injunctive groups under certain semidirect extensions}
\label{s:mark-groups-proof}
As a consequence of Lemma~\ref{l:virtual-post-injunctive} and Theorem~\ref{t:fully-residually-post-injunctive}, we obtain the following stable properties under certain semidirect extension of post-injunctive groups. 

\begin{theorem}
\label{t:semdirect-extension-res-finite-kernel}
Every semidirect extension of a post-injunctive group with  finitely
generated residually finite kernel is also a post-injunctive group. More specifically, if a group $G$ satisfies a semidirect  extension of groups 
$$
0 \to K \to G \to H\to 0.
$$
where  $H$ is post-injunctive  and  $K$ is  finitely generated residually finite, then    $G$ is also a post-injunctive group.   
\end{theorem}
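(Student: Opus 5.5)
Write $G = K \rtimes H$, so every element of $G$ is uniquely a pair $(k,h)$ with product $(k,h)(k',h') = (k\,\alpha_h(k'), hh')$, where $\alpha\colon H\to \mathrm{Aut}(K)$ is the action. The plan is to show that $G$ is fully residually virtually post-injunctive and then conclude by Lemma~\ref{l:virtual-post-injunctive} and Theorem~\ref{t:fully-residually-post-injunctive}. This is the classical argument (due to Mal'cev type reasoning) used for residual finiteness of such extensions.

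\textbf{Step 1: characteristic finite-index subgroups of $K$.} Fix a finite set $E\subset G$ and let $E_K\subset K$ be the finite set of nontrivial elements $k_1^{-1}k_2$ arising from pairs $(k_1,h),(k_2,h)\in E$ with the same $H$-coordinate. Since $K$ is residually finite, there is a finite-index normal subgroup $N_0\subset K$ avoiding $E_K$. Since $K$ is finitely generated, it has only finitely many subgroups of any given finite index. Therefore
$$N=\bigcap_{\beta\in \mathrm{Aut}(K)} \beta(N_0)$$
is a finite intersection, hence has finite index. It is characteristic in $K$ and still avoids $E_K$. This is the one place where finite generation is used, and it is the main point to get right.

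\textbf{Step 2: the quotient.} Because $N$ is characteristic, it is $\alpha_h$-invariant for all $h\in H$. Hence $N$ (viewed as $N\times\{1\}$) is normal in $G$, and
$$G/N \cong (K/N)\rtimes H.$$
Let $\varphi\colon G\to G/N$ be the projection. Suppose $(k_1,h_1),(k_2,h_2)\in E$ have the same image. Then $h_1=h_2$ and $k_1^{-1}k_2\in N$, so $k_1=k_2$ by the choice of $N$. Thus $\varphi\vert_E$ is injective.

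\textbf{Step 3: the quotient is virtually post-injunctive.} The group $G/N$ contains the subgroup $\{1\}\times H\cong H$. This subgroup has index $|K/N|<\infty$, and it is post-injunctive by hypothesis. So $G/N$ is virtually post-injunctive, hence post-injunctive by Lemma~\ref{l:virtual-post-injunctive}.

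\textbf{Step 4: conclusion.} Since $E$ was arbitrary, $G$ is fully residually post-injunctive. Theorem~\ref{t:fully-residually-post-injunctive} then shows that $G$ is post-injunctive.

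The only delicate step is Step 1. One needs normality of $N$ in the whole of $G$, not just in $K$, and this is exactly what the characteristic-subgroup trick from finite generation provides.
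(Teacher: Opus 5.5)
Your proof is correct and follows the same route as the paper. The paper establishes exactly the two closure properties you use (Corollary~\ref{cor:finite-kernel} of Lemma~\ref{l:virtual-post-injunctive}, and Theorem~\ref{t:fully-residually-post-injunctive}) and then invokes the split case of \cite[Theorem~1]{arzhantseva} as a black box; the only difference is that you prove that citation inline via the characteristic finite-index subgroup argument ($K$ finitely generated, so $N$ is $\alpha_h$-invariant and normal in $G$, and $G/N\cong (K/N)\rtimes H$ is virtually $H$), which makes your proof self-contained.
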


\begin{proof}
By Corollary~\ref{cor:finite-kernel} of Lemma~\ref{l:virtual-post-injunctive}, every semidirect extension with finite kernel of a post-injunctive group is  also a post-injunctive group. By Theorem~\ref{t:fully-residually-post-injunctive}, fully residually post-injunctive groups are also post-injunctive. Therefore, the split case of 
    \cite[Theorem~1]{arzhantseva} implies that every semidirect extension of a post-injunctive groups with a finitely
generated residually finite kernel is post-injective.  
\end{proof}

\section{Around Gromov's injectivity lemma}
\label{s:analogues-gromov-lemma}

Gromov's injectivity lemma (cf. \cite[Lemma 4.H"]{gromov-esav} and \cite[Theorem~3.6.1]{csc-book}) states that every injective cellular automaton $\tau \colon \Sigma \to \Sigma$ over a subshift $\Sigma\subset A^G$ can be extended to an injective cellular automaton over every subshift $X\subset A^G$ which lies in a nontrivial open neighborhood of  $\Sigma$. 
\par 
In this section, we prove some  analogues of Gromov's injectivity lemma (see Lemma~\ref{l:pre-injective-mark-group} and Lemma~\ref{l:post-surjective-mark-group}) where injectivity is replaced by other fundamental properties such as pre-injectivity and post-surjectivity. Some counterexamples are also given.  
\par 
We begin with the following lemma which shows that the limit of a certain sequence of pre-injective, resp. surjective, cellular automata is also a pre-injective, resp. surjective, cellular automaton. 

\begin{lemma}
    \label{l:pre-injective-mark-group} Let $(H_i)_{i \in I}$ be a net in $\NN(\Gamma)$ of normal subgroups of a group $\Gamma$ which converges to some $H \in \NN(\Gamma)$. Let  $\widetilde{\tau}\colon A^\Gamma \to A^\Gamma$ be a cellular automaton. Let $\tau_i \colon A^{\Gamma/H_i} \to A^{\Gamma/H_i}$ for $i\in I$ and let $\tau \colon A^{\Gamma/H} \to A^{\Gamma/H}$  be the  cellular automata induced by $\widetilde{\tau}$. 
    Then the following properties hold. 
    \begin{enumerate}[\rm (i)]
        \item If  $\tau_i$ is pre-injective for every $i \in I$ then $\tau$ is also pre-injective; 
         \item If  $\tau_i$ is surjective for every $i \in I$ then $\tau$ is also surjective.  
    \end{enumerate}

\end{lemma}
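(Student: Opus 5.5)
Both statements are compactness-free transfers of finite patterns between $\Gamma/H$ and $\Gamma/H_i$ for $i$ large. The transfer uses the convergence $H_i\to H$ in $\NN(\Gamma)$. Let $M\subset \Gamma$ be a finite memory set of $\widetilde{\tau}$ with local map $\mu$. By Section~\ref{s:map-phi-psi}, $\tau$ and $\tau_i$ have memory sets $\pi(M)$ and $\pi_i(M)$, where $\pi\colon\Gamma\to\Gamma/H$ and $\pi_i\colon \Gamma\to\Gamma/H_i$ are the projections. Concretely,
$$\tau(x)(\pi(g))=\mu\bigl(m\mapsto x(\pi(gm))\bigr),\qquad \tau_i(x)(\pi_i(g))=\mu\bigl(m\mapsto x(\pi_i(gm))\bigr).$$
The basic observation is the following. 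For a finite set $W\subset\Gamma$, put $S=W^{-1}W$. Whenever $(H,H_i)\in V_S$, that is $H_i\cap S=H\cap S$, we have for all $g,g'\in W$
$$\pi(g)=\pi(g')\iff g^{-1}g'\in H\iff g^{-1}g'\in H_i\iff \pi_i(g)=\pi_i(g').$$
Since $H_i\to H$, this holds for all $i$ beyond some index $i_0$. Consequently the assignment $x\vert_{\pi(W)}\mapsto x'\vert_{\pi_i(W)}$, $x'(\pi_i(g)):=x(\pi(g))$ for $g\in W$, is a well-defined bijection $A^{\pi(W)}\to A^{\pi_i(W)}$, and likewise in the other direction.

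For (i), argue by contradiction. Suppose $p\neq q$ in $A^{\Gamma/H}$ are asymptotic with $\tau(p)=\tau(q)$. Choose a finite $E\subset\Gamma$ with $p,q$ agreeing off $\pi(E)$, and put $W=EM^{-1}M$. Take $i$ as above and fix $a\in A$. Define $p_i,q_i\in A^{\Gamma/H_i}$ by transferring $p\vert_{\pi(W)}$ and $q\vert_{\pi(W)}$ to $\pi_i(W)$, and setting both equal to $a$ elsewhere. Then $p_i,q_i$ are asymptotic, agree off $\pi_i(E)$, and are distinct: at a point $e\in E$ with $p(\pi(e))\neq q(\pi(e))$ we get $p_i(\pi_i(e))\neq q_i(\pi_i(e))$. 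Now compare $\tau_i(p_i)$ and $\tau_i(q_i)$ at a point $\pi_i(g)$.
\begin{itemize}
\item If $\pi_i(gM)\cap\pi_i(E)=\varnothing$, the two inputs agree on the memory window, so the values coincide.
\item Otherwise $g\in EM^{-1}H_i$. Replace $g$ by a representative $g\in EM^{-1}$, so that $gM\subset W$. The displayed formula then gives $\tau_i(p_i)(\pi_i(g))=\tau(p)(\pi(g))=\tau(q)(\pi(g))=\tau_i(q_i)(\pi_i(g))$.
\end{itemize}
Thus $\tau_i$ is not pre-injective, which is a contradiction.

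For (ii), assume $A$ is finite, as throughout. Then $A^{\Gamma/H}$ is compact and $\tau$ is continuous, so $\tau(A^{\Gamma/H})$ is closed. It therefore suffices to show that for every $y\in A^{\Gamma/H}$ and every finite $W_0\subset\Gamma$ there is $x$ with $\tau(x)\vert_{\pi(W_0)}=y\vert_{\pi(W_0)}$. Set $W=W_0M$ and take $i$ as above for this $W$. Let $y_i\in A^{\Gamma/H_i}$ be any configuration whose restriction to $\pi_i(W_0)$ is the transfer of $y\vert_{\pi(W_0)}$. By surjectivity pick $x_i$ with $\tau_i(x_i)=y_i$. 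Define $x$ on $\pi(W)$ by $x(\pi(g))=x_i(\pi_i(g))$ for $g\in W$, which is well defined by the observation, and extend $x$ arbitrarily. For $g\in W_0$ we have $gM\subset W$, so $\tau(x)(\pi(g))=\tau_i(x_i)(\pi_i(g))=y_i(\pi_i(g))=y(\pi(g))$.

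The only delicate point is the well-definedness of these pattern transfers. In (i) the window has to be large enough, namely $W=EM^{-1}M$ with the right convergence set $W^{-1}W$, and representatives in the nontrivial case must be chosen inside $EM^{-1}$. Everything else is direct evaluation of the local rule.
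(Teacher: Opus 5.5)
Your proof is correct. For (i) you follow the same overall strategy as the paper: take a witness of non-pre-injectivity of $\tau$, transfer it to $\Gamma/H_j$ for some $j$ far along the net, pad with a constant, and check that the images coincide. The implementation differs, though.

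The paper works upstairs in $A^\Gamma$. It uses the Hausdorff--Bourbaki convergence $\Fix(H_i)\to\Fix(H)$ from \cite[Theorem~3.4.4]{csc-book} to produce $H_j$-periodic configurations $u,v$ that agree with the lifted witnesses on $E=F\widetilde{M}^2$. It glues these with a constant configuration off $H_jE$. To show $\widetilde\tau(z)=\widetilde\tau(t)$ it then needs the auxiliary identity $E\cap HF=E\cap H_jF$ and a two-case analysis off $H_jF\widetilde{M}$.

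You work directly on the quotient groups instead. Your observation that $H_i\cap W^{-1}W=H\cap W^{-1}W$ forces $\pi$ and $\pi_i$ to identify exactly the same points of $W$ is the elementary fact underlying the cited convergence theorem. With the window $W=EM^{-1}M$ and representatives chosen in $EM^{-1}$, the verification reduces to a single evaluation of the local rule.

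For (ii) the paper gives only citations, namely passing $\widetilde\tau(Z_i)=Z_i$ to the Hausdorff--Bourbaki limit. Your density-plus-closed-image argument is a self-contained version of the same compactness reasoning. Both versions genuinely need $A$ finite in (ii), despite your opening remark that the transfers are compactness-free. Part (i) needs no finiteness in either version. Your route is shorter and self-contained. The paper's has the advantage of living in the same $\Fix(H_i)$ framework it reuses for Lemma~\ref{l:post-surjective-mark-group} and Theorem~\ref{t:marked-groups-closed}.

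Two minor points should be stated explicitly.
\begin{enumerate}
\item You need $M\neq\varnothing$, so that $1_\Gamma\in M^{-1}M$ and hence $E\subset W$ in (i).
\item In (ii), transferring $y\vert_{\pi(W_0)}$ to $\pi_i(W_0)$ requires your observation for $W_0$ itself. But $W_0$ is not literally contained in $W=W_0M$ unless $1_\Gamma\in M$. Either enlarge $M$ to contain $1_\Gamma$ at the outset, or fix $m_0\in M$. Then $W_0m_0\subset W$, and since $\pi,\pi_i$ are homomorphisms, $\pi(g)=\pi(g')\iff\pi(gm_0)=\pi(g'm_0)$, and likewise for $\pi_i$.
\end{enumerate}
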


\begin{proof}
For (i), assume  that $\tau_i$ is pre-injective for every $i \in I$.  
    Suppose on the contrary that $\tau$ is not pre-injective. This means that there exist distinct configurations $x,y \in Z=\Fix(H)$ and a finite subset $F\subset \Gamma$ such that $\widetilde{\tau}(x)=\widetilde{\tau}(y)$ and $x\vert_{\Gamma \setminus (HF)}= y\vert_{\Gamma \setminus (HF)}$ but $x\vert_{F} \neq y\vert_{F}$. Up to enlarging $F$, we can clearly suppose that $F=F^{-1}$.  \par 
Let $Z=\Fix(H)$ and $Z_i=\Fix(H_i)$ for $i \in I$. Note that the restriction $\widetilde{\tau}\vert_{Z}\colon Z \to Z$ is conjugate to $\tau $ and similarly, the restriction $\widetilde{\tau}\vert_{Z_i}\colon Z_i \to Z_i$ is conjugate to  $\tau_i$ for every $i \in I$.
    \par 
Let $\pi\colon \Gamma \to \Gamma/H$ and $\pi_i \colon \Gamma \to \Gamma/H_i$ for $i \in I$ be the projections. 
Let $\widetilde{M}\subset \Gamma$ be a finite memory set of $\widetilde{\tau}$ and let $\widetilde{\mu} \colon A^M \to A$ be the corresponding local transition map. Without loss of generality, assume that  $1_G\in \widetilde{M}$ and that $\widetilde{M}$ is symmetric, i.e., $\widetilde{M}=(\widetilde{M})^{-1}$. Consider the following finite subset  
$$
E= F (\widetilde{M})^2 \supset F \widetilde{M}\supset F.  
$$
\par 
Let $W= \{(x,y)\in A^\Gamma \times A^\Gamma\colon x\vert_{E} = y\vert_{E} \}$ be an entourage of $A^\Gamma$. By the result \cite[Theorem~3.4.4]{csc-book}, 
 the convergence of the net $(H_i)_{i \in I}$ to $H$ in $\NN(\Gamma)$ implies the convergence of the net $(Z_i)_{i \in I}$ to $Z$ in the Hausdorff-Bourbaki topology on $\PP(A^\Gamma)$. 
We thus obtain some $j\in I$ such that  $(H,H_j)\in V_{E^2}$ and $(Z,Z_j)\in \widehat{W}$,   
Therefore, $H\cap E^2= H_j\cap E^2$ and $Z_j\subset W[Z]$. Thus there exist
  configurations $u,v \in Z_j$ with  $(u,x), (v,y)\in W$. In particular, we have 
\begin{align}
\label{lemma:pre-injective-quotient-limit}
    u\vert_{E} =  x\vert_{E}, \qquad   v\vert_{E} =   y\vert_{E}. 
\end{align}
We claim that $E \cap HF= E\cap H_j F$. Indeed, let $f \in F$. Suppose that $hf=g\in E$. If $h \in H$ then $h =gf^{-1}\in EF^{-1}=EF\subset E^2$ and thus $h \in E^2\cap H=E^2\cap H_j \subset H_j$. Therefore, $E \cap HF \subset E \cap H_jF$. A similar argument shows the converse inclusion and we conclude that 
\begin{align}
\label{lemma:pre-injective-quotient-limit-5}
   E \cap HF = E \cap H_jF. 
\end{align}
\par 
We fix an arbitrary constant configuration   $c\in \Fix(\Gamma)$. Let us define  $z,t\in Z_j$ by setting 
\begin{align}
\label{lemma:pre-injective-quotient-limit-2}
\begin{cases}
   z\vert_{H_jE} = u\vert_{H_jE},
    \\
    t\vert_{H_jE} = v\vert_{H_jE}, \\
    z\vert_{\Gamma \setminus (H_jE)} =  t\vert_{\Gamma \setminus (H_jE)}=c\vert_{\Gamma \setminus (H_jE)}. 
\end{cases}
\end{align}
Note that $z,t\in \Fix(H_j)=Z_j$ since $u,v,c\in \Fix(H_j)$. Since $E$ is finite, it follows from \eqref{lemma:pre-injective-quotient-limit-2} that the image configurations $\Psi_j(z), \Psi_j(t)\in A^{\Gamma/H_j}$ under the canonical bijection $\Psi_j \colon \Fix(H_j) \to A^{\Gamma/H_j}$ (see Section~\ref{s:map-phi-psi}) are asymptotic. Moreover, $\Psi_j(z)$ and $ \Psi_j(t)$ are distinct since $F\subset E\subset H_jE$ and by \eqref{lemma:pre-injective-quotient-limit}, 
$$
z\vert_F = u\vert_F = x\vert_F  \neq y\vert_F= v\vert_F=t\vert_F,  
$$
and thus for any $g \in F$ such that $z(g) \neq t(g)$, we have 
$$\Psi_j(z)(\pi_j(g))= z(g)\neq t(g)= \Psi_j(t)(\pi_j(g)).$$
\par 
\textbf{Claim:} $\widetilde{\tau}(z) = \widetilde{\tau}(t)$. Indeed, let $hg \in H_jF\widetilde{M}$ where $h\in H_j$ and $g \in F\widetilde{M}$. Then $g\widetilde{M}\subset F(\widetilde{M})^2=E$ and in particular $hg\in H_jE$.  Combining with the relations   \eqref{lemma:pre-injective-quotient-limit} and \eqref{lemma:pre-injective-quotient-limit-2}, we find that 
\begin{align} 
\label{lemma:pre-injective-quotient-limit-1}
(g^{-1}z)\vert_{\widetilde{M}}&=(g^{-1}u)\vert_{\widetilde{M}}= (g^{-1}x)\vert_{\widetilde{M}},\\ (g^{-1}t)\vert_{\widetilde{M}}&=(g^{-1}v)\vert_{\widetilde{M}}=(g^{-1}y)\vert_{\widetilde{M}}.\nonumber \end{align}
 Using  the hypothesis $\widetilde{\tau}(x)=\widetilde{\tau}(y)$,  the relation \eqref{lemma:pre-injective-quotient-limit-1}, and the fact that $\widetilde{\tau}(z), \widetilde{\tau}(t)\in \Fix(H_j)$ since $z,t\in \Fix(H_j)$, we can thus compute 
\begin{align*}
    \widetilde{\tau}(z)(hg)    = \widetilde{\tau}(z)(g)&=   \widetilde{\mu}((g^{-1}z)\vert_{\widetilde{M}}) = \widetilde{\mu}((g^{-1}x)\vert_{\widetilde{M}})\\
    & = \widetilde{\tau}(x)(g) =  \widetilde{\tau}(y)(g) \\& =  \widetilde{\mu}((g^{-1}y)\vert_{\widetilde{M}})= \widetilde{\mu}((g^{-1}t)\vert_{\widetilde{M}}) = \widetilde{\tau}(t)(g) = \widetilde{\tau}(t)(hg). 
\end{align*}
This proves that 
\begin{align}
\label{e:lemma-pre-1}
\widetilde{\tau}(z)\vert_{H_jF\widetilde{M}}= \widetilde{\tau}(t)\vert_{H_jF\widetilde{M}}. 
\end{align}
Now suppose that $g\in \Gamma \setminus (H_j F\widetilde{M})$. Since $(\widetilde{M})^{-1}=\widetilde{M}$, we deduce that $
g\widetilde{M} \subset \Gamma \setminus (H_j F)$. 
We will show that $z\vert_{g\widetilde{M}}= t\vert_{g\widetilde{M}}$. For this, let $m \in \widetilde{M}$. We distinguish two cases according to whether $gm \in H_jE$. 
\par 
\textbf{Case 1:} $gm \notin H_jE$. Then by \eqref{lemma:pre-injective-quotient-limit-2}, we have $z(gm)=t(gm)=c(gm)$. 
\par 
\textbf{Case 2:} $gm \in H_jE$. Then  
$
gm \in H_jE \setminus H_jF$ because $gm \in g\widetilde{M} \subset \Gamma \setminus (H_jF)$. 
Thus we can write $gm= h k$ where $h \in H_j$ and $k \in E$. Note that $k \notin H_jF$ for otherwise $hk \in H_jF$ as $H_j$ is a group. As $k \in E\setminus H_jF$, we infer from the relation  \eqref{lemma:pre-injective-quotient-limit-5} that  $k \notin HF$. 
Using \eqref{lemma:pre-injective-quotient-limit-2}, \eqref{lemma:pre-injective-quotient-limit} and the fact that $u\in \Fix(H_j)$, we have 
\begin{align}
\label{lemma:pre-injective-quotient-limit-3}
z(gm)&=u(gm)= u(hk)=u(k)=x(k)\\
t(gm)&=v(gm)= v(hk)=v(k)=y(k). \nonumber 
\end{align} 
Since   $x\vert_{\Gamma \setminus (HF)}= y\vert_{\Gamma \setminus (HF)}$ and $k \notin HF$, we have $x(k)=y(k)$. It follows from  \eqref{lemma:pre-injective-quotient-limit-3} that $z(gm)=t(gm)$. 
\par 
Therefore, $z\vert_{g\widetilde{M}}= t\vert_{g\widetilde{M}}$ and we can compute  
 \begin{align*}
     \widetilde{\tau}(z)(g)  = \widetilde{\mu}((g^{-1}z)\vert_{\widetilde{M}})=\widetilde{\mu}((g^{-1}t)\vert_{\widetilde{M}})  = \widetilde{\tau}(t)(g). 
 \end{align*} 
This proves that 
\begin{align}
    \label{e:lemma-pre-2}
    \widetilde{\tau}(z)\vert_{\Gamma \setminus (H_j F\widetilde{M}) }= \widetilde{\tau}(t)\vert_{\Gamma \setminus (H_j F\widetilde{M})}. 
\end{align}
From  \eqref{e:lemma-pre-1} and \eqref{e:lemma-pre-2}, we conclude that $\widetilde{\tau}(z)=\widetilde{\tau}(t)$. Since $\Psi_j(z)$ and $\Psi_j(t)$ are distinct asymptotic configurations, it follows that $\tau_j$ is not pre-injective. We thus obtain a contradiction to the pre-injectivity hypothesis of $\tau_j$. Therefore, we conclude that $\tau$ must be pre-injective and (i) is proved. 
\par 
The point (ii) is an immediate consequence of e.g. \cite[Theorem~3.4.4]{csc-book},  \cite[Proposition B.4.3]{csc-book}, and \cite[Proposition B.4.6]{csc-book}. The detailed argument is also contained in the proof of \cite[Theorem~3.7.1]{csc-book}. The proof of the lemma is thus complete. 
\end{proof}

 Our next result extends Gromov's injectivity lemma to a similar result for  post-surjectivity.   

\begin{lemma}
     \label{l:post-surjective-mark-group} Let $(H_i)_{i \in I}$ be a net in $\NN(\Gamma)$ of normal subgroups of a group $\Gamma$ which converges to some $H \in \NN(\Gamma)$. Let  $\widetilde{\tau}\colon A^\Gamma \to A^\Gamma$ be a cellular automaton. Let $\tau_i \colon A^{\Gamma/H_i} \to A^{\Gamma/H_i}$ for $i\in I$ and let $\tau \colon A^{\Gamma/H} \to A^{\Gamma/H}$  be the  cellular automata induced by $\widetilde{\tau}$. 
    Then the following  hold. 
    \begin{enumerate}[\rm (i)]
        \item  If $\tau$ is  post-surjective then  there exists $i_0 \in I$  such that  $\tau_i$ is post-surjective for every $i \in I$ with $i\geq i_0$;  
         \item  If $\tau$ is  injective then there exists $i_0 \in I$  such that  $\tau_i$ is injective for every $i \in I$ with $i\geq i_0$. 
    \end{enumerate}

\end{lemma}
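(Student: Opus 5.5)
For (ii) I would follow the proof of Gromov's injectivity lemma in \cite[Theorem~3.6.1]{csc-book}. The restriction $\widetilde{\tau}\vert_{Z}$ with $Z=\Fix(H)$ is conjugate to $\tau$, so it is injective. Since $Z$ is compact, $\widetilde{\tau}\vert_Z$ is a uniform embedding. Gromov's lemma then supplies an entourage $W$ of $A^\Gamma$ such that $\widetilde{\tau}$ is injective on every subshift $X$ with $X\subset W[Z]$. By \cite[Theorem~3.4.4]{csc-book}, the net $Z_i=\Fix(H_i)$ converges to $Z$ in the Hausdorff--Bourbaki topology, so $(Z,Z_i)\in\widehat{W}$ for $i\geq i_0$. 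Each $Z_i$ is a subshift, hence $\widetilde{\tau}\vert_{Z_i}$ is injective, and so is the conjugate map $\tau_i$.

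For (i) I would transport the argument of Step~2 in the proof of Theorem~\ref{t:fully-residually-post-injunctive}, with the finite quotient map now replaced by $\Gamma/H_j$ versus $\Gamma/H$ agreeing on a finite window.

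First, $\tau$ is post-surjective over $\Gamma/H$. Its lift $\widetilde{\tau}\vert_{Z}$ therefore has the following uniform property, from Theorem~\ref{t:uniform-post-surj} applied to $\tau$ (assuming $\Gamma$ countable, or passing to the finitely generated subgroup containing the memory as in Theorem~\ref{t:locally-post-injunctive}). There is a finite symmetric $F\subset\Gamma$ containing $1_\Gamma$ and $\widetilde{M}$ such that for $x\in Z$ and $y\in Z$ with $y=\widetilde{\tau}(x)$ off $H$, some $t\in Z$ satisfies $\widetilde{\tau}(t)=y$ and $t=x$ off $HF$.

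Next, set $E=F^3$ and choose $i_0$ with $H_i\cap E^2=H\cap E^2$ for $i\ge i_0$. As in \eqref{lemma:pre-injective-quotient-limit-5}, cosets of $H$ and of $H_i$ then meet $F^2$ in the same pattern. Fix such an $i$ and verify criterion (iii) of Lemma~\ref{l:post-surj} for $\tau_i$. Take $x',y'\in Z_i$ with $y'=\widetilde{\tau}(x')$ off $H_i$. Define $x\in Z$ by $x\vert_{HF^3}$ copied from $x'$ on $F^3$ and extended $H$-periodically, which is well defined by the matching condition, and let $x$ be constant elsewhere. Define $y$ similarly, changing only the value at $1$.

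Apply the uniform property to get $t\in Z$ differing from $x$ only on $HF$. Then define $s\in Z_i$ by $s(hg)=t(g)$ for $h\in H_i$, $g\in F$, and $s=x'$ off $H_iF$. The checks are those of the two claims in Step~2: $s$ is $H_i$-periodic, $s$ is asymptotic to $x'$ modulo $H_i$, and $\widetilde{\tau}(s)=y'$. On $H_iF^2$ the local patterns of $s$ coincide with those of $t$ through the window $F^3$. Off it, $s$ agrees with $x'$ on every $g\widetilde{M}$. Projecting by $\Psi_i$ then gives post-surjectivity of $\tau_i$.

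The main obstacle is (i), and specifically this window-matching step. One must check that the $H$-periodic configuration built from $x'$ has $\widetilde{\tau}$-image agreeing with $\widetilde{\tau}(x')$ on the relevant finite region. One must also check that the correction $t$, which lives on $HF$, transfers to an $H_i$-periodic correction without collisions; this needs injectivity of $F^2\to\Gamma/H_i$ together with the equality $H\cap E^2=H_i\cap E^2$. I would enlarge $E$ (for instance to $F^4\widetilde{M}$) if the memory-overlap bookkeeping requires it.
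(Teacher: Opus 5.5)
Your proposal is correct and follows essentially the paper's route. Part (ii) goes through Gromov's injectivity lemma together with the Hausdorff--Bourbaki convergence of $\Fix(H_i)$ to $\Fix(H)$. Part (i) approximates $x'$ by an $H$-periodic configuration, corrects it using uniform post-surjectivity over $\Gamma/H$, and transplants the correction back $H_i$-periodically. Your explicit periodizations, based on $H\cap F^6=H_i\cap F^6$, simply replace the paper's appeal to \cite[Theorem~3.4.4]{csc-book} for producing the approximating configurations.

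One correction to your closing remark: injectivity of $F^2\to\Gamma/H_i$ is neither available nor needed. It is not available because already $F^2\to\Gamma/H$ need not be injective. It is not needed because well-definedness of $s$ uses only $H_i\cap F^2=H\cap F^2$ together with $t\in\Fix(H)$. Moreover, the matching $H_i\cap F^6=H\cap F^6$ coming from $E=F^3$ already handles all the memory bookkeeping, so no further enlargement of $E$ is required.
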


\begin{proof}
(ii) is an immediate consequence of Gromov's injectivity lemma. For (i), 
we denote   $G=\Gamma/H$.  Let $Z=\Fix(H)\subset A^\Gamma$ and  $Z_i=\Fix(H_i)\subset A^\Gamma$ for all $i \in I$. Let $\pi\colon \Gamma \to G$ and $\pi_i \colon \Gamma \to \Gamma/H_i$ for $i \in I$ be the projections. 
Let $\widetilde{M}\subset \Gamma$ be a finite memory set of $\widetilde{\tau}$ and let $\widetilde{\mu} \colon A^M \to A$ be the corresponding local transition map. We can assume that  $1_\Gamma\in \widetilde{M}$ and  $\widetilde{M}=(\widetilde{M})^{-1}$. Let $M= \pi(\widetilde{M})\subset G$. Then $M$ is a memory set of $\tau$ and the associated local transition map $\mu \colon A^M \to A$ is given by $\mu(x)=\widetilde{\mu}(\Phi(x))$ for all $x \in A^{G}$ where $\Psi\colon A^K \to \Fix(H)$ is the canonical bijection described in \eqref{map:phi}. 
\par 
Suppose that  $\tau\colon A^G \to A^G$ is  a post-surjective cellular automaton. We infer from the uniform post-surjectivity Theorem~\ref{t:uniform-post-surj} that there exists a finite subset $T\subset G$ such that for every $x,y \in A^G$ with $y\vert_{G\setminus \{1_G\}} = \tau(x)\vert_{G\setminus \{1_G\}}$, there exists some configuration $z\in A^\Gamma$ satisfying   $z\vert_{G\setminus T}= x\vert_{G\setminus T} $ and $\tau(z)= y$.   Up to enlarging $T$, we can assume without loss of generality that $T$ is symmetric, that is, $T=T^{-1}$. 
We lift each $g\in T\setminus M$ to an element $\rho(g)\in \Gamma$ such that $\pi(\rho(g))=g$. 
Let us denote  
$$\widetilde{T}=\{\rho(g)\colon g\in T\}\subset \Gamma, \qquad    E = \widetilde{T}\cup \widetilde{M}.$$ 
Since $\widetilde{T}, \widetilde{M}$ are symmetric, so is $E$ and thus $1_\Gamma\in E = E^{-1}$. It follows that   
\begin{align}
\label{e:mark-group-proof-1}
    \widetilde{T},    \widetilde{M}  \subset E \subset E^2. 
\end{align}
\par 
Let $W= \{(x,y)\in A^\Gamma\times A^\Gamma \colon x\vert_{E^6} = y\vert_{E^6}  \}$ be an entourage of $A^\Gamma$. By the convergence of the net $(H_i)_{i \in I}$ to $H$ in $\NN(\Gamma)$, the net $(Z_i)_{i \in I}$ also converges  to $Z$ in the Hausdorff-Bourbaki topology on $\PP(A^\Gamma)$ (see e.g. \cite[Theorem~3.4.4]{csc-book}). 
 Consequently, there exists $i_0\in I$ such that for every $i \geq 0 $ with $i\in I$, we have $(Z,Z_i)\in \widehat{W}$ and $(H,H_i)\in V_{E^6}$, that is,  
\begin{align}
\label{e:mark-group-proof-00}
 Z\subset W[Z_i], \qquad  Z_i\subset W[Z], \qquad H\cap E^6= H_i\cap E^6. 
 \end{align}
 \par 
\textbf{Claim:} $\tau_i$ is post-surjective for every $i \geq i_0$ with $i\in I$. Indeed, fix $i \geq i_0$ and let $x_i, y_i\in Z_i$ and $u_i=\widetilde{\tau}(x_i)$ such that 
\begin{align}
    \label{e:mark-group-proof-10}
y_i\vert_{\Gamma\setminus H_i}= u_i\vert_{\Gamma\setminus H_i}.
\end{align}
Since $Z_i \in W[Z]$, there exists $x\in Z$ such that \begin{align}
    \label{e:mark-group-proof-0}
    x\vert_{E^6}=x_i\vert_{E^6}.
    \end{align}
Let $u=\widetilde{\tau}(x)$. As $\widetilde{M}$ is a memory set of $\widetilde{\tau}$, it follows from the relation $E^5\widetilde{M}\subset E^6$ (see \eqref{e:mark-group-proof-1})  that 
\begin{align}
    \label{e:mark-group-proof-2}
u\vert_{E^5}= u_i\vert_{E^5}. 
\end{align}
Let us define a configuration $y\in Z$ by setting 
\begin{align}
     \label{e:mark-group-proof-7}
     \begin{cases}
     y(h) = y_i(1_\Gamma) \text{ for all } h \in H, \\
     y\vert_{\Gamma\setminus H}= u\vert_{\Gamma\setminus H}.
     \end{cases}
\end{align} 
By the post-surjectivity of $\tau$ (or equivalently, the corresponding notion of post-surjectivity of the restriction  $\widetilde{\tau}\vert_{Z} \colon Z\to Z$) and the choice of $\widetilde{T}, T$, we can find $z \in Z$ such that 
\begin{align}
    \label{e:mark-group-proof-3}
    z\vert_{\Gamma \setminus (H\widetilde{T})} = x\vert_{\Gamma \setminus (H\widetilde{T})}, \qquad   \widetilde{\tau}(z)= y. 
\end{align}
Using the inclusion $Z\subset W[Z_i]$, we obtain a configuration $z_i\in Z_i$ such that 
\begin{align}
    \label{e:mark-group-proof-4}
    z_i\vert_{E^6}=z\vert_{E^6}. 
\end{align}
We deduce from the relations \eqref{e:mark-group-proof-4} and $E^5\widetilde{M}\subset E^6$ (by using \eqref{e:mark-group-proof-1}) that 
\begin{align} 
  \label{e:mark-group-proof-5}
\widetilde{\tau}(z_i)\vert_{E^5} = \widetilde{\tau}(z)\vert_{E^5} = y\vert_{E^5}. 
\end{align}
Let us define a configuration $w\in A^\Gamma$ by setting 
\begin{align}
\label{e:mark-group-proof-6}
\begin{cases}
   w\vert_{H_iE^3} = z_i\vert_{H_iE^3},
    \\
    w\vert_{\Gamma \setminus (H_iE^3)} =  x_i\vert_{\Gamma \setminus (H_iE^3)}. 
\end{cases}
\end{align}
Note that $w\in Z_i=\Fix(H_i)$  since $x_i, z_i \in \Fix(H_i)$. We claim that 
\begin{align}
\label{e:mark-group-proof-8}
w\vert_{H_iE^6} = z_i\vert_{H_iE^6}. 
\end{align}
Indeed, let $hg \in H_iE^6$ with $h \in H_i$ and $g \in E^6$. To prove $w(hg)=z_i(hg)$, we distinguish two cases according to whether $g\in H_iE^3$. 
\par 
\textbf{Case 1:} $g = h_i k\in H_iE^3$ where $h_i \in H_i$ and $k \in E^3$. Then we have $hh_i \in H_i$ and it follows from  \eqref{e:mark-group-proof-6} that 
$$
w(hg)= w((hh_i)k) = z_i((hh_i)k) =  z_i(hg). 
$$
\par 
\textbf{Case 2:} $g\notin H_i E^3$. Then $g\notin HE^3$. Indeed,  we can write otherwise $g=h'g'$, where $h'\in H$ and $g'\in E^3$. Thus $h'=g(g')^{-1}\in E^6$ (recall that $E^{-1}=E$) and $h'\in H \cap E^6=H_i\cap E^6 \subset H_i$ because of the relation \eqref{e:mark-group-proof-00}. Therefore, $g=h'g'\in H_iE^3$, which is a contradiction. Now as $g\notin HE^3$ and $\widetilde{T}\subset E^3$, we deduce that $g\in \Gamma \setminus (H\widetilde{T})$. It follows that 
\begin{align*}
    w(hg) & = w(g) & \text{(as } w \in \Fix(H_i) \text{ and }h\in H_i)\\
    & = x_i(g) & \text{(by }  \eqref{e:mark-group-proof-6} \text{ and } g\notin H_iE^3)\\
    & = x(g)  & \text{(by }  \eqref{e:mark-group-proof-0} \text{ and } g \in E^6)\\
    & = z(g) & \text{(by }  \eqref{e:mark-group-proof-3} \text{ and } g\in \Gamma \setminus (H\widetilde{T}) )\\
    & = z_i(g) & \text{(by }  \eqref{e:mark-group-proof-4} \text{ and } g \in E^6)\\
    & = z_i(hg) & \text{(as }   z_i \in \Fix(H_i) \text{ and } h\in H_i). 
\end{align*}
The claim~\eqref{e:mark-group-proof-8} is thus proved. The post-surjectivity of $\tau_i$ is an immediate consequence of the following claim combined with Lemma~\ref{l:post-surj}.  
\par 
\textbf{Claim:} $\widetilde{\tau}(w)=y_i$. For this, suppose first that  $hg \in H_iE^4$ where $h \in H_i$ and $g \in E^4$. Then we have $g\widetilde{M} \subset E^4\widetilde{M}\subset E^5\subset E^6$ and thus 
\begin{align*} 
\widetilde{\tau}(w)(hg)& =\widetilde{\tau}(w)(g) &  \text{(as }   \widetilde{\tau}(w) \in \Fix(H_i) \text{ and } h\in H_i)\\
& = \widetilde{\mu}((g^{-1}w)\vert_{\widetilde{M}}) & \\
&= \widetilde{\mu}((g^{-1}z_i)\vert_{\widetilde{M}}) &  \text{(by }  \eqref{e:mark-group-proof-8} \text{ and } g\widetilde{M} \subset E^6) \\
& = \widetilde{\tau}(z_i)(g) &   
\\& = y(g) & \text{(by } \eqref{e:mark-group-proof-5} \text{ and } g \in E^4\subset E^5).  
\end{align*}
If $g\in H$ then $g\in H\cap E^4=H_i\cap E^4\subset H_i$ (by \eqref{e:mark-group-proof-00} and $E^4\subset E^6$). From  the above computation, we find that  
\begin{align*}
\widetilde{\tau}(w)(hg)=y(g) & =y_i(1_\Gamma) & \text{(by }  \eqref{e:mark-group-proof-7} \text{ and } g \in H)  \\
& = y_i(hg) & \text{(as } y_i \in \Fix(H_i) \text{ and } hg \in H_i)
\end{align*} 
If $g\in E^4\setminus H$ then  $g\notin H$   and $g\notin H_i$ (because $H_i\cap E^4=H\cap E^4$) so that 
\begin{align*}
\widetilde{\tau}(w)(hg) = y(g) & = u(g)  & \text{(by }  \eqref{e:mark-group-proof-7} \text{ and } g\notin H)\\
& = u_i(g) & \text{(by }  \eqref{e:mark-group-proof-2}  \text{ and } g\in E^4\subset E^5)\\
& = y_i(g) &  \text{(by }  \eqref{e:mark-group-proof-10} \text{ and } g\notin H_i) \\
& = y_i(hg) & \text{(as } y_i \in \Fix(H_i) \text{ and } h \in H_i). 
\end{align*}
Therefore, $\widetilde{\tau}(w)\vert_{H_iE^4}= y_i\vert_{H_iE^4}$. Now let $g \in \Gamma \setminus H_iE^4$. Then $g\widetilde{M}\subset \Gamma  \setminus  (H_iE^3)$ since  $(\widetilde{M})^{-1}=\widetilde{M}\subset E$. It follows from \eqref{e:mark-group-proof-6} that $(g^{-1}w)\vert_{\widetilde{M}}=(g^{-1}x_i)\vert_{\widetilde{M}}$. On the other hand, $g\notin H_i$ (since $g\notin H_iE^4$ and $1_\Gamma\in E$).  Consequently, we obtain from \eqref{e:mark-group-proof-10} that 
\begin{align*} 
\widetilde{\tau}(w)(g)  = \widetilde{\mu}((g^{-1}w)\vert_{\widetilde{M}}) = \widetilde{\mu}((g^{-1}x_i)\vert_{\widetilde{M}}) = \widetilde{\tau}(x_i)(g) = u_i(g)= y_i(g).  
\end{align*}
This implies $\widetilde{\tau}(w)\vert_{\Gamma \setminus (H_i E^4)} = y_i\vert_{\Gamma \setminus (H_i E^4)}$. We conclude that $\widetilde{\tau}(w)=y_i$. Therefore, ${\tau}_i$ is post-surjective for every $i \geq i_0$ with $i\in I$. This shows (i) and the proof of the theorem is complete. 
\end{proof}

As a concluding remark of this   section, the following simple example shows that the converse of each of the statements (i) and (ii) in the above Lemma~\ref{l:post-surjective-mark-group} is not true for both post-surjectivity and injectivity. In fact, the example also shows that the limit of a sequence of invertible cellular automata may be a non-invertible cellular automaton. 

\begin{example}
Consider the binary field alphabet $A = \{0,1\}$. Let $M= \{0,1,2\}$ be a subset of the group universe $G= \Z$. We define a cellular automaton $\tau\colon A^\Z \to A^\Z$ admitting $M$ as a memory set. The local transition map   $\mu\colon A^M \to A$ is given by  the formula  $\mu(x,y,z)=x+y+z \in A$ (mod 2). For each integer $n \geq 1$, let $H_n= (3n+1)\Z$ be the kernel of the projection map $\pi_n \colon \Z \to \Z/H_n$. It is clear that the sequence $(H_n)_{n \geq 1}$ converges to the trivial subgroup $H=\{1_G\} \subset \Z$. 
\par 
Let $Z_n=\Fix(H_n)\subset A^\Z$ be the subshift of $H_n$-periodic configurations. We denote by   $\tau_n \colon A^{\Z/H_n} \to A^{\Z/H_n}$ the cellular automaton conjugate to the restriction $\tau\vert_{Z_n}\colon Z_n \to Z_n$.  Observe from our construction that $\tau_n$,  as a linear endomorphism of the vector space $A^{\Z/H_n}= A^{\{\overline{0}, \overline{1}, \dots, \overline{3n}\}}\simeq A^{3n+1}$ is given by the following $(3n+1)\times (3n+1)$ circulant matrix 
$$
C_n= \left(\begin{matrix}
    1 & 1 & 1 & 0 & 0 & 0 & \dots & 0 & 0 & 0 \\
    0 & 1 & 1 & 1 & 0 & 0 & \dots & 0 & 0 & 0 \\
    0 & 0 & 1 & 1 & 1 & 0 & \dots & 0 & 0 & 0 \\
     &  & \dots &  &  &  & \dots &  & \dots  &  \\
     0 & 0 & 0 & 0 & 0 & 0 & \dots & 1 & 1 & 1 \\
     1 & 0 & 0 & 0 & 0 & 0 & \dots & 0 & 1 & 1 \\
     1 & 1 & 0 & 0 & 0 & 0 & \dots & 0 & 0 & 1 
\end{matrix}
\right)
$$
Let $f(x)= 1+x+x^2$ be the associated polynomial of $C$ and let $\omega_n = \exp\left(\frac{2i\pi}{3n+1}\right)\subset \C$ be a primitive $(3n+1)$-th root of $1$. Note that $f(1)=3$ and 
$$f(\omega_n^k)= \frac{\omega_n^{3k}-1}{\omega^k_n-1}$$ for all $k=1,\dots, 3n$ since $\omega_n^k\neq 1$. Moreover,  as $\mathrm{gcd}(3,3n+1)=1$, we have $$\{\omega_n^{3k}\colon k=1,\dots,3n\}= \{\omega_n^{k}\colon k=1,\dots,3n\}.$$ 
The real determinant $D_\R(C_n)\in \R$ of $C_n$, where we regard $C_n$ as a real circulant matrix, can be computed by the formula (see e.g. \cite{gray}): 
\begin{align*}
D_\R (C_n )& = \prod_{k=0}^{3n}f(\omega_n^k)  = f(\omega_n^0) \prod_{k=1}^{3n}f(\omega_n^k)\\
&= 3  \prod_{k=1}^{3n} \frac{\omega_n^{3k}-1}{\omega^k_n-1} = 3   \frac{\prod_{k=1}^{3n} (\omega_n^{3k}-1)}{\prod_{k=1}^{3n} (\omega^k_n-1)}\\
 &= 3   \frac{\prod_{k=1}^{3n} (\omega_n^{k}-1)}{\prod_{k=1}^{3n} (\omega^k_n-1)}=3. 
\end{align*}
Consequently, the determinant $D_A(C_n)\in A$ of $C_n$ (where $C_n$ is regarded as a matrix with coefficients in the binary field $A$) is 
$$D_A(C_n)= D_\R(C_n) \text{ mod }2 = 1\neq 0. 
$$
Therefore, every cellular automaton $\tau_n$ is a linear isomorphism of the finite dimensional vector space $A^{\Z/H_n}$. The sequence $(\tau_n)_{n \geq 1}$ thus consists of  invertible (and also injective post-surjective) cellular automata. 
Note that for one-dimensional cellular automata with finite alphabet (the group universe being $\Z$), injectivity and  post-surjectivity are equivalent notions and they are both equivalent to invertibility.      However,  the limit cellular automaton $\tau$ of the sequence of cellular automata $(\tau_n)_{n \geq 1}$  is not invertible since $\tau$ is not injective:  $\tau(c)=\tau(d)=c$ where $c,d\in A^\Z$ are distinct configuration defined for all $k \in \Z$ by   
    $$
    c(k)=0, \quad d(3k)=0, \quad d(3k+1)=d(3k+2)=1. 
    $$
\end{example}

\section{Closedness of marked post-injunctive groups}
\label{s:closedness-mark-groups}

In this section, we show that post-injunctive groups form a closed subset of the space of $\Gamma$-marked groups for any group $\Gamma$.

\begin{theorem}
\label{t:marked-groups-closed}
Let $\Gamma$ be a group. Then the set of normal subgroups $H \subset \Gamma$ such that the quotient  $\Gamma/H$ is a post-injunctive group is closed in $\NN(\Gamma)$. 
\end{theorem}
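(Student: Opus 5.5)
We show that the complement is open, or equivalently that the set is closed under limits of nets. Let $(H_i)_{i\in I}$ be a net in $\NN(\Gamma)$ converging to $H\in\NN(\Gamma)$, and assume that every $\Gamma/H_i$ is post-injunctive. We must prove that $G=\Gamma/H$ is post-injunctive. So fix a finite alphabet $A$ and a post-surjective cellular automaton $\tau\colon A^G\to A^G$, with memory set $M\subset G$ and local transition map $\mu\colon A^M\to A$; we must show that $\tau$ is pre-injective.

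\textbf{Step 1: lift $\tau$ to $\Gamma$.} Choose a finite set $\widetilde{M}\subset\Gamma$ such that $\pi\vert_{\widetilde{M}}\colon\widetilde{M}\to M$ is a bijection. Define $\widetilde{\mu}\colon A^{\widetilde{M}}\to A$ by transporting $\mu$ along this bijection, and let $\widetilde{\tau}\colon A^\Gamma\to A^\Gamma$ be the resulting cellular automaton. By the construction of Section~\ref{s:map-phi-psi}, the cellular automaton induced by $\widetilde{\tau}$ on $A^{\Gamma/H}$ has memory set $\pi(\widetilde{M})=M$ and local transition map $\mu$. Indeed, for $x\in A^G$, the lift satisfies $\Phi(x)(\tilde m)=x(\pi(\tilde m))$, so this induced automaton is exactly $\tau$. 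For each $i$, let $\tau_i\colon A^{\Gamma/H_i}\to A^{\Gamma/H_i}$ be the cellular automaton induced by $\widetilde{\tau}$.

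\textbf{Step 2: transfer post-surjectivity and pre-injectivity.} Since $\tau$ is post-surjective, Lemma~\ref{l:post-surjective-mark-group}(i) gives $i_0\in I$ such that $\tau_i$ is post-surjective for all $i\geq i_0$. Each $\Gamma/H_i$ is post-injunctive, so $\tau_i$ is pre-injective for all $i\geq i_0$. The subnet $(H_i)_{i\geq i_0}$ still converges to $H$. Applying Lemma~\ref{l:pre-injective-mark-group}(i) to this subnet, we conclude that $\tau$ is pre-injective. Since $A$ and $\tau$ were arbitrary, $\Gamma/H$ is post-injunctive and the set is closed.

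\textbf{Main obstacle.} The substantive work is already contained in Lemmas~\ref{l:post-surjective-mark-group} and~\ref{l:pre-injective-mark-group}. Two points remain to be checked.

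First, the induced automaton on $\Gamma/H$ really is $\tau$. This holds because $\pi\vert_{\widetilde{M}}$ is a bijection onto $M$, so no two memory points collapse.

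Second, Lemma~\ref{l:post-surjective-mark-group} relies on Theorem~\ref{t:uniform-post-surj}, which is stated only for countable $G$. If $\Gamma$ is uncountable, I would first reduce to the countable case. Take the finitely generated subgroup $G_0=\langle M\rangle\subset G$ and the restricted cellular automaton. By Lemma~\ref{l:pre-post-induction}, it suffices to prove that this restriction is pre-injective. Then replace $\Gamma$ by a finitely generated subgroup $\Gamma_0\subset\Gamma$ with $\pi(\Gamma_0)=G_0$. The net $H_i\cap\Gamma_0$ converges to $H\cap\Gamma_0$ in $\NN(\Gamma_0)$, and each quotient $\Gamma_0/(H_i\cap\Gamma_0)$ is a subgroup of $\Gamma/H_i$. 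By Theorem~\ref{t:subgroup-post-injunctive}, these quotients are post-injunctive. The argument of Steps 1 and 2 then runs in the countable group $\Gamma_0$.

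Alternatively, one can observe that the uniform post-surjectivity statement only requires a countable subgroup containing the memory set. Either route handles the uncountable case.
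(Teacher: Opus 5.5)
Your proof is correct and matches the paper's argument: you lift $\tau$ to $\widetilde{\tau}$ on $A^\Gamma$ via a bijective lift of the memory set, obtain post-surjectivity of $\tau_i$ for $i\geq i_0$ from Lemma~\ref{l:post-surjective-mark-group}(i), deduce pre-injectivity from post-injunctivity of $\Gamma/H_i$, and pass to the limit with Lemma~\ref{l:pre-injective-mark-group}(i). Your extra reduction to a finitely generated $\Gamma_0$ is sound and covers a point the paper leaves implicit, namely the countability hypothesis of Theorem~\ref{t:uniform-post-surj} used inside Lemma~\ref{l:post-surjective-mark-group} (it needs both directions of Lemma~\ref{l:pre-post-induction}(ii); the paper's proof of that lemma establishes both, although the statement lists only one).
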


\begin{proof}
Let $(H_i)_{i \in I}$ be a net in $\NN(\Gamma)$ such that the quotient groups $\Gamma/H_i$ are post-injunctive. Suppose that the net $(H_i)_{i \in I}$ converges to some normal subgroup $H \in \PP(\Gamma)$. Let $Z=\Fix(H)\subset A^\Gamma$ and  $Z_i=\Fix(H_i)\subset A^\Gamma$ for all $i \in I$.  We have to show that the quotient group $G=\Gamma/H$ is  post-injunctive. 
\par 
Let $\pi \colon \Gamma \to G$ be the induced projection. Let $A$ be a finite alphabet and let $\tau\colon A^G \to A^G$ be a post-surjective group. 
Let $M\subset G$ be a finite memory set of $\tau$ and let $\mu\colon A^M \to A$ be the corresponding local transition map. We fix an element $\rho(g)\in \Gamma$ for each $g\in M$ such that $\pi(\rho(g))=g$. Let 
$$\widetilde{M} = \{\rho(g)\colon g \in M\}\subset \Gamma.  
$$
The map $\mu$ lifts to a map $\widetilde{\mu} \colon A^{\widetilde{M}} \to A$ via the bijection  $\rho \colon  A^{\widetilde{M}}\to  A^M$
 defined by $\rho(x)(g)= x(\pi(g))$ for all $x \in A^{\widetilde{M}}$ and $g \in M$. Let $\widetilde{\tau}\colon A^\Gamma \to A^\Gamma$ be the cellular automaton with memory $\widetilde{M}$ and local transition map $\widetilde{\mu}$. Note that the restriction $\widetilde{\tau}\vert_{Z}\colon Z \to Z$ is conjugate to the cellular automaton $\tau\colon A^G\to A^G$ and similarly, every restriction $\widetilde{\tau}\vert_{Z_i}\colon Z_i \to Z_i$ is conjugate to some induced cellular automaton $\tau_i \colon A^{\Gamma/H_i}\to A^{\Gamma/H_i}$. 
 \par
Since $\tau$ is post-surjective by hypothesis,  we infer from Lemma~\ref{l:post-surjective-mark-group} that there exists $i_0\in I$ such that ${\tau}_i$ is post-surjective for every $i_0 \in I$ with $i\geq i_0$.  Since  $\Gamma/H_i$ is post-injunctive for every $i \in I$, it follows that $\tau_i$ must be pre-injective for all $i \geq i_0$. Since the net $(H_i)_{i\in I, i\geq 0}$ converges to $H$ in $\NN(\Gamma)$, we deduce from Lemma~\ref{l:pre-injective-mark-group} that the cellular automaton $\tau$ must be pre-injective as well. This shows that $G$ is a post-injunctive group and the proof of the theorem is complete. 
\end{proof}

\bibliographystyle{splncs04}

\begin{thebibliography}{8} 


\bibitem{arzhantseva}
{G. Arzhantseva, S. R. Gal.}: {On approximation properties of semidirect products of groups}, Annales mathématiques Blaise Pascal. 27, 125--130 (2020). 


\bibitem{bartholdi-kielak}
{Bartholdi, L.}:  
{Amenability of groups is characterized by Myhill's Theorem. With an appendix by D. Kielak}, 
J. Eur. Math. Soc. vol. 21, Issue 10 (2019), pp. 3191--3197. 
  
 

 

\bibitem{burks}
{Burks, A.W.}:
von Neumann’s self-reproducing automata. In: Burks, A.W. (ed.) Essays on Cellular Automata, pp. 3–64. University of Illinois Press, Champaign (1971)

 


\bibitem{kari-post-surjective}
{Capobianco, S., Kari, J., Taati, S.}: 
{An “almost dual” to Gottschalk’s Conjecture}. 22th International Workshop on Cellular Automata and Discrete Complex Systems (AUTOMATA), Jun 2016, Zurich, Switzerland. pp.77--89

\bibitem{tullio}
{\sc T.~Ceccherini-Silberstein, A.~Mach{\`{\i}}, and F.~Scarabotti}, {\em
  Amenable groups and cellular automata}, Ann. Inst. Fourier (Grenoble), 49
  (1999), pp.~673--685.
   
\bibitem{hedlund-csc}
{Ceccherini-Silberstein, T.,  Coornaert, M.}:  
A generalization of the Curtis-Hedlund theorem, Theoret. Comput. Sci., 400 (2008), pp. 225–229


\bibitem{csc-book} 
{Ceccherini-Silberstein, T.,  Coornaert, M.}: 
{Cellular Automata and Groups},  Springer Monographs in Mathematics, Springer-Verlag, Berlin, 2010. 


 
 

\bibitem{ceccherini}
{Ceccherini-Silberstein, T., Mach{\`{\i}}, A., Scarabotti, F.}: {
  Amenable groups and cellular automata}, Ann. Inst. Fourier (Grenoble), 49
  (1999), pp.~673--685.

 
 


 
\bibitem{Den-12a}  
{Dennunzio, A., Formenti, E., Provillard, J.}:   {Non-uniform cellular automata: Classes, dynamics, and decidability},  
Information and Computation
Volume 215, June 2012, Pages 32-46
 
\bibitem{Den-12b} 
{Dennunzio, A., Formenti, E., Provillard, J.}:    {Local rule distributions, language complexity and non-uniform
cellular automata}. Theoretical Computer Science, 504  (2013) 38–51.  
 

  



\bibitem{gromov-esav}
{Gromov, M.}:  {Endomorphisms of symbolic algebraic varieties}, J. Eur.
  Math. Soc. (JEMS), 1 (1999), pp.~109--197.
 



\bibitem{gottschalk}
{Gottschalk, W.H.}:  
{Some general dynamical notions}, 
Recent advances in topological dynamics, Springer, Berlin, 1973, pp. 120--125. Lecture Notes in Math. Vol. 318.

\bibitem{gray}
{Gray, R.M.}: 
{Toeplitz and circulant matrices: A review},  Foundations and Trends in Communications and Information Theory (2006), 2 (3): 155--239. 


\bibitem{hedlund}  {Hedlund, G.A.}: 
{Endomorphisms and automorphisms of the shift dynamical system}, 
Math. Systems Theory, 3 (1969), pp.~320--375.


  
  



\bibitem{moore}
{Moore, E.F.}:  {Machine models of self-reproduction}, vol.~14 of Proc.
  Symp. Appl. Math., American Mathematical Society, Providence, 1963,
  pp.~17--34.

\bibitem{myhill}
{Myhill, J.}:  {The converse of {M}oore's {G}arden-of-{E}den theorem},
  Proc. Amer. Math. Soc., 14 (1963), pp.~685--686.
 

 



\bibitem{neumann}
{von Neumann, J.}:  
In: Burks, A.W. (ed.) The Theory of Self-reproducing Automata. University of Illinois Press, Urbana (1966)

 

 


\bibitem{phung-tcs}
{Phung, X.K.}: 
{On invertible and stably reversible non-uniform cellular automata},  
Theoret. Comput. Sci., vol. 940 (2023), pp.~43--59, https://doi.org/10.1016/j.tcs.2022.09.011
 
 


 
  


  
 


 

  
\bibitem{weiss-sgds}
{Weiss, B.}:  
{Sofic groups and dynamical systems}, 
Sankhy\=a Ser. A 62 (2000), no. 3, pp.~350--359. 
Ergodic theory and harmonic analysis (Mumbai, 1999). 

 

\end{thebibliography}

\end{document}